\DeclarePairedDelimiter\abs{\lvert}{\rvert}
\newcommand{\startpicturesplicingproof}{	\renewcommand*{\locationOfNode}{1.5}
	\renewcommand*{\locationOfStart}{3}
	\renewcommand*{\heigthofedges}{1}
	\coordinate (dOne) at (-\locationOfStart,\heigthofedges);
	\coordinate (dN) at (-\locationOfStart,-\heigthofedges)  ;
	\coordinate (daOne) at (\locationOfStart,\heigthofedges)  ;
	\coordinate (daN) at (\locationOfStart,-\heigthofedges)  ;
	\coordinate (down) at (0,-\heigthofedges)  ;
	\coordinate (vL) at (-\locationOfNode,0) ;
	\coordinate (vR) at (\locationOfNode,0)  ;
	\coordinate (vLL) at (-\locationOfStart,0) ;
	\coordinate (vRR) at (\locationOfStart,0)  ;
	\coordinate (labelup) at (0,.3);
	\coordinate (labeldown) at (0,-.3);
		\coordinate (down) at (0,-\heigthofedges)  ;
	
	\coordinate (smalldown) at (0,-.1);
	\coordinate (smallup) at (0,.05);
	
	\coordinate (vLu) at ($(vL) + (smallup)$);
	\coordinate (vLLu) at ($(vLL) + (smallup)$);
	\coordinate (vRu) at ($(vR) + (smallup)$);
	\coordinate (vRRu) at ($(vRR) + (smallup)$);

	\coordinate (vLd) at ($(vL) + (smalldown)$);
	\coordinate (vLLd) at ($(vLL) + (smalldown)$);
	\coordinate (vRd) at ($(vR) + (smalldown)$);
	\coordinate (vRRd) at ($(vRR) + (smalldown)$);
	}
	\newcommand*{\widthoppaddingbackgroundsplicingproof}{12pt}
\newcommand{\partApicturesplicingproof}{
\begin{scope}[on background layer]
    \filldraw[color=\colorA!30, line width=\widthoppaddingbackgroundsplicingproof, rounded corners]   ($(dOne) + (-.5,.5)$) -- ($(dOne)+ (.5,.5) $) -- ($(vL)$) -- ($(dN) + (0,-.25)$) -- ($(dN) + (-.5,-.5)$) -- cycle;
 
 \end{scope}
\draw (dOne)--(vL) node [midway, above ]  {$d_1$};
\draw (dN)--(vL)  node [midway, above ]  {$d_n$};
\draw node at ($ 1/2*(dOne) + 1/2*(dN) +(1/3,0) $) {\vdots};
\fill ($1/4*(vL) + 3/4*(dOne)$) circle [radius=3pt];
\fill ($2/3*(vL) + 1/3*(dOne)$) circle [radius=3pt];
\fill ($1/4*(dN) + 3/4*(vL)$) circle [radius=3pt];
\fill ($2/3*(dN) + 1/3*(vL)$) circle [radius=3pt];
\draw node[color=\colorA, rectangle, rounded corners, draw=\colorA] at ($1/2*(vL) + 1/2*(dN)+(down)$){$A$} ;
}
\newcommand{\partBpicturesplicingproof}{
\begin{scope}[on background layer]
    \draw[color=\colorB!30, line width=\widthoppaddingbackgroundsplicingproof, rounded corners] (vL) -- (vR);
\end{scope}
\draw (vL)--(vR) node[very near start, above]{$d$} node[very near end, above]{$d'$};
\fill (vR) circle [radius=3pt];
\fill (vL) circle [radius=3pt];
\fill ($1/4*(vL) + 3/4*(vR)$) circle [radius=3pt];
\fill ($2/3*(vL) + 1/3*(vR)$) circle [radius=3pt];
\draw node[color=\colorB, rectangle, rounded corners, draw=\colorB] at ($1/2*(vL) + 1/2*(vR)+(down)$){$B$} ;
}
\newcommand{\partCpicturesplicingproof}{
\begin{scope}[on background layer]
\filldraw[color=\colorC!30,line width=\widthoppaddingbackgroundsplicingproof, rounded corners] 
 ($(vR) + (0,-0.5)$) --($(vR) + (0,.5)$) -- (daOne) -- (daN)   -- cycle; 
\end{scope}
\draw (vR)--(daOne)  node [midway, above ]  {$d'_1$};
\draw (vR)--(daN)  node [midway, above ]  {$d'_{n'}$};
\draw node at ($ 1/2*(daOne) + 1/2*(daN) +(-1/3,0) $) {\vdots};
\fill ($1/4*(vR) + 3/4*(daOne)$) circle [radius=3pt];
\fill ($2/3*(vR) + 1/3*(daOne)$) circle [radius=3pt];
\fill ($1/4*(daN) + 3/4*(vR)$) circle [radius=3pt];
\fill ($2/3*(daN) + 1/3*(vR)$) circle [radius=3pt];
\draw node[color=\colorC, rectangle, rounded corners, draw=\colorC] at ($1/2*(vR) + 1/2*(daN)+(down)$){$C$} ;
}
\newcommand{\partDpicturesplicingproof}{
\begin{scope}[on background layer]
	\filldraw[color=\colorD!30,line width=\widthoppaddingbackgroundsplicingproof, rounded corners]
	 ($(vLL) + (-1,-.2)$) -- ($(vLL) + (-1,.2)$)  -- ($(vL)+(0,1)$)-- ($(vL)+(0,-1)$) --cycle;
\end{scope}
\draw node[color=\colorD, rectangle, rounded corners, draw=\colorD] at ($1/2*(vL) + 1/2*(dN)+(down)$){$D$};
\draw[<-] (vLLu) -- (vLu) node[ near end, above]{$D$};
\draw[<-, dashed] (vLLd) -- (vLd);
\node at ($(vLLu) + (labelup)$) {$(M')$};
\node at ($(vLLd) + (labeldown)$) {$i'-1$};
\fill ($2/3*(vL) + 1/3*(vLL)$) circle [radius=3pt];
}
\newcommand{\partEpicturesplicingproof}{
\begin{scope}[on background layer]
	\filldraw[color=\colorE!30,line width=\widthoppaddingbackgroundsplicingproof, rounded corners] 
	 ($(vRR) + (1,-.2)$) -- ($(vRR) + (1,.2)$)  --  ($.5*(vRR) + .5*(vR) + (0,1.5)$)  -- ($(vR)+(0,.75)$)-- ($(vR)+(0,-.75)$) --cycle;
\end{scope}
	\draw[->] (vRu) -- (vRRu) node[ near start, above]{$D'$};
	\draw[->, dashed] (vRd) -- (vRRd);
	
	\node at ($(vRRd) + (labeldown)$) {$i-1$};
	\node at ($(vRRu) + (labelup)$) {$(M)$};
	\fill ($1/4*(vR) + 3/4*(vRR)$) circle [radius=3pt];
	\draw node[color=\colorE, rectangle, rounded corners, draw=\colorE] at ($1/2*(vR) + 1/2*(daN)+(down)$){$E$} ;
}
	\newcommand*{\heigthofedges}{1}
	\newcommand*{\locationOfNode}{2}
	\newcommand*{\locationOfStart}{4}
	\colorlet{darkbrown}{brown!50!black}
\newcommand*{\colorA}{red}
\newcommand*{\colorB}{green}
\newcommand*{\colorC}{blue}
\newcommand*{\colorD}{black}
\newcommand*{\colorE}{darkbrown}
\newcommand{\drawLetter}[2]{{\begin{tikzpicture}\draw node[color=#1, rectangle, rounded corners,inner sep=2,outer sep=0, draw=#1] at (0,0){\footnotesize  #2} ;\end{tikzpicture}}}
\newcommand{\letterA}{\drawLetter{\colorA}{A}{}}
\newcommand{\letterB}{\drawLetter{\colorB}{B}{}}
\newcommand{\letterC}{\drawLetter{\colorC}{C}{}}
\newcommand{\letterD}{\drawLetter{\colorD}{D}{}}
\newcommand{\letterE}{\drawLetter{\colorE}{E}{}}
\newcommand{\C}{\ensuremath{\mathbb{C}}}
\newcommand{\Q}{\ensuremath{\mathbb{Q}}}
\newcommand{\N}{\ensuremath{\mathbb{N}}}
\newcommand{\Z}{{\ensuremath{\mathbb{Z}}}}
\newcommand{\CompletedGrothendieckRing}[1]{\ensuremath{\hat{\mathcal{M}}_{#1}}}
\newcommand{\LocalizedGrothendieckring}[1]{\ensuremath{\mathcal{M}_{#1}}}
\newcommand{\Grothendieckring}[1]{\ensuremath{K_0(\Var_{{#1}})}}
\newcommand{\lefschetzmotive}{\ensuremath{\mathbb{L}}}
\newcommand{\classof}[1]{\ensuremath{ [#1]}}
\newcommand{\quotientInGrothendieckRing}[2]{\frac{#1}{#2}}
\newcommand{\Grothendieckgroupofabelianschemes}[1]{\ensuremath{\mathcal{A}_{#1}}}
\newcommand{\Grothendieckgroupofabelianvarieties}[1]{\ensuremath{{A}_{#1}}}
\newcommand{\Grothendieckgroupofabelianvarietiesuptoisogeny}[1]{\ensuremath{{A^0_{#1}}}}
\newcommand{\monodromicGrothendieckRing}[1]{\ensuremath{K_0^{\muhat}(\Var_{{#1}})}}
\newcommand{\monodromicLocalizedGrothendieckring}[1]{{\ensuremath{\mathcal{M}^{\muhat}_{#1}}}}
\newcommand{\monodromicCompletedGrothendieckRing}[1]{\ensuremath{\hat{\mathcal{M}}^{\muhat}_{#1}}}
\newcommand{\muhat}{\ensuremath{\hat{\mu }}}
\newcommand{\topZeta}[2]{Z_{#1}^{\text{top}}(#2)}
\newcommand{\topZetaTwisted}[4]{Z_{#1,#2}^{\text{top}, (#3)}(#4)}
\newcommand{\topZetaTwistedDiagram}[3]{Z_{#1}^{\text{top}, (#2)}(#3)}
\newcommand{\motivicZeta}[1]{Z_{#1}(T)}
\newcommand{\motivicZetaWithDifferentialForm}[2]{Z_{#1, #2}(T)}
\newcommand{\motivicZetaWithDifferentialFormT}[3]{Z_{#1, #2}(#3)}
\newcommand{\motivicZetaFunctioncoef}[1]{\ensuremath{\mathfrak{X}_{#1}}}
\newcommand{\monodromicmotivicZetaFunctioncoef}[1]{\ensuremath{\mathfrak{X}_{#1,1}}}
\newcommand{\monodromicmotivicZetaFunction}[1]{{\ensuremath{Z^{\muhat}_{#1}(T)}}}
\newcommand{\monodromicmotivicZetaFunctionT}[2]{{\ensuremath{Z^{\muhat}_{#1}(#2)}}}
\newcommand{\arcsoforderandac}[1]{\ensuremath{\mathcal{Z}_{#1,1}}}
\newcommand{\arcsoforder}[1]{\ensuremath{\mathcal{Z}_{#1}}}
\newcommand{\arcsoforderagainstdifform}[1]{\ensuremath{\Delta_{#1}}}
\newcommand{\EI}{\ensuremath{E_I}}
\newcommand{\EIO}{\ensuremath{E_I^\circ}}
\newcommand{\EIOT}{\ensuremath{\tilde{E}_I^\circ}}
\newcommand{\chitop}{\ensuremath{\chi_{\text{top}}}}
\newcommand{\njets}[2]{\ensuremath{\mathcal{L}_{#1} (#2)}}
\newcommand{\arcspace}[1]{\ensuremath{\mathcal{L} (#1)}}
\DeclareMathOperator{\ord}{ord}
\DeclareMathOperator\Hom{Hom}
\DeclareMathOperator\supp{Supp}
\DeclareMathOperator\Set{Set}
\newcommand{\inverselimit}{\ensuremath{\varprojlim}}
\DeclareMathOperator\Spec{Spec}
\DeclareMathOperator\divisor{div}
\DeclareMathOperator\Var{Var}
\DeclareMathOperator\Sch{Sch}
\DeclareMathOperator\Pic{Pic}
\DeclareMathOperator\Bl{Bl}
\DeclareMathOperator\NS{NS}
\DeclareMathOperator\dimensionop{dim}
\newcommand{\projectivenspace}[2]{\ensuremath{\mathbb{P}^{#1}_{#2}}}
\newcommand{\affinenspace}[2]{\ensuremath{\mathbb{A}^{#1}_{#2}}}
\newcommand{\dimension}[1]{\ensuremath{\dimensionop #1}}
\newcommand{\NeronSeveriGroup}[1]{\ensuremath{\NS(#1)}}
\newcommand{\sumofabelian}{\oplus}
\newcommand{\cohomologygroups}[3]{\ensuremath{H^{#1}(#2,#3)}}
\newcommand{\WeilintermediateJacobian}[2]{\ensuremath{A_{#2}^{0,#1}}}
\newcommand{\componentgroup}[2]{\ensuremath{A_{#2}^{c,#1}}}
\newcommand{\sumofhodgestructures}{\oplus}
\newcommand{\bigsumofhodgestructures}{\bigoplus}
\newcounter{general}[section]
\newtheorem{theorem}[general]{Theorem}
\newtheorem{lemma}[general]{Lemma}
\newtheorem{corollary}[general]{Corollary}
\newtheorem*{corollary*}{Corollary}
\newtheorem*{theorem*}{Theorem}
\newtheorem{proposition}[general]{Proposition}
\theoremstyle{definition}
\newtheorem*{definition}{Definition}
\newtheorem{example}[general]{Example}
\title{Splicing for motivic zeta functions}
 \author{Thomas Cauwbergs}
\address{KU Leuven, Dept. Wiskunde, Celestijnenlaan 200B, 3001 Leuven, Belgium}
\email{thomas.cauwbergs@wis.kuleuven.be}
\begin{document}
\begin{abstract}
 \noindent We lift the splicing formula of N\'emethi and Veys, which deals with polynomials in two variables, to the motivic level.
 After defining  the motivic zeta function   and 
 the monodromic motivic zeta function with respect to a differential form, we prove a splicing formula for 
 them, which specializes to this  formula of N\'emethi and Veys.
  We also show that we cannot introduce a monodromic motivic zeta functions in terms of
 a (splice) diagram since it does not contain all the necessary information.
 In the last part we discuss the generalized monodromy conjecture of N\'emethi and Veys. 
 The statement also holds for     motivic zeta functions but it turns out that the analogous statement 
  for   monodromic motivic zeta functions is not correct. We show some examples illustrating this.

\end{abstract}
\maketitle

\makeatletter{}\section*{Introduction}
We will consider here  a polynomial $f\in \C[x,y]$, which has at most a singularity at the origin.

\subsection*{Splicing}
The topology of a plane curve singularity $\{f = 0\}$ at the origin is closely related to its     link $\{(z_1,z_2) \in \C^2 \mid f(z_1,z_2) = 0, \abs{z_1}^2 + \abs{z_2}^2  = \varepsilon\}$, 
where $\varepsilon$ is sufficiently small. This link can be studied by looking at its splice diagram.  

Splicing itself is  originally a technique 
from link theory
  which constructs a new link out of two given links.  
We can also use splicing to decompose complicated links into easier links. 
This   decomposition procedure  has a nice description  for our case of plane curve singularities. Fix   
an embedded resolution of singularities $\pi: X\to \affinenspace{2}{\C}$ of $f^{-1}(0)$. The splice diagram $\Gamma$ associated to
$f$ and $\pi$ is then constructed
by taking the dual graph of the exceptional curves  of $\pi$, 
removing nodes  if necessary, where these nodes  correspond to exceptional curves,   and adding
decorations to the edges. 

The splicing of a splice diagram $\Gamma$ along an edge $e$ produces two new splice diagrams $\Gamma_R$ and $\Gamma_L$. 
As shown in Figure \ref{fig:splicingintro}, it divides $\Gamma$ into two pieces and then makes them again into a splice diagram
by adding   appropriate  multiplicities $M$ and  $M'$. Splicing the links of
$\Gamma_L$ and $\Gamma_R$ together, we obtain the link of $\Gamma$ (see \cite{EisenbudNeumann} for more details).

\makeatletter{}\begin{figure}
 \begin{tabular}{cc}
  \multicolumn{2}{c}{ \begin{subfigure}[b]{0.5\textwidth}
  \begin{center}
   \begin{tikzpicture}[scale=0.8]
	\renewcommand*{\locationOfNode}{2}
	\renewcommand*{\locationOfStart}{4}
	\renewcommand*{\heigthofedges}{1}
	\coordinate (dOne) at (-\locationOfStart,\heigthofedges);
	\coordinate (dN) at (-\locationOfStart,-\heigthofedges)  ;
	\coordinate (daOne) at (\locationOfStart,\heigthofedges)  ;
	\coordinate (daN) at (\locationOfStart,-\heigthofedges)  ;
	\coordinate (vL) at (-\locationOfNode,0) ;
	\coordinate (vR) at (\locationOfNode,0)  ;
	\draw (dOne)--(vL) node [midway, above ]  {$d_1$};
	\draw (dN)--(vL)  node [midway, above ]  {$d_n$};
	\draw[very thick] (vL)--(vR) node[very near start, above]{$d$} node[very near end, above]{$d'$};
	\draw (vR)--(daOne)  node [midway, above ]  {$d'_1$};
	\draw (vR)--(daN)  node [midway, above ]  {$d'_{n'}$};
	\draw node at ($ 1/2*(dOne) + 1/2*(dN) +(1/3,0) $) {\vdots};
	\draw node at ($ 1/2*(daOne) + 1/2*(daN) +(-1/3,0) $) {\vdots};
 	\draw node at ($(vL) + (0,-1) $){$v_L$};
 	\draw node at ($(vR) + (0,-1) $){$v_R$};
	\draw node at ($1/2*(vR) + 1/2*(vL) + (0,-1) $){$e$};
		\fill (vR) circle [radius=3pt];
	\fill (vL) circle [radius=3pt];
\end{tikzpicture}
  \end{center}
    \caption{The original diagram $\Gamma$}
    \label{fig:splicingexpl:Gamma:intro}
        \end{subfigure}} \\   
        \begin{subfigure}[b]{0.3\textwidth}
\begin{center}
 \begin{tikzpicture}[scale=0.8]
	\renewcommand*{\locationOfNode}{0}
	\renewcommand*{\locationOfStart}{2}
	\renewcommand*{\heigthofedges}{.75}
	\coordinate (sup) at (0,0);
	\coordinate (dOne) at (-\locationOfStart,\heigthofedges);
	\coordinate (dN) at (-\locationOfStart,-\heigthofedges)  ;
	\coordinate (daOne) at (\locationOfStart,0)  ; 
	\coordinate (vL) at (-\locationOfNode,0) ;
	\coordinate (vR) at (\locationOfNode,0)  ;
	\draw (dOne)--(vL) node [midway, above ]  {$d_1$};
	\draw (dN)--(vL)  node [midway, above ]  {$d_n$}; 
	\draw[->] ($(vL)+(sup)$)--($(daOne) + (sup)$)  node [ near start, above ]  {$d$} node [very near end, above] {$(M)$};
	\draw node at ($ 1/2*(dOne) + 1/2*(dN) +(1/3,0) $) {\vdots};
 	\fill (vL) circle [radius=3pt];
\end{tikzpicture}
               \caption{The diagram $\Gamma_L$}
                \label{fig:splicingexpl:GammaL:intro}
 \end{center}
         \end{subfigure}   &  
   
        \begin{subfigure}[b]{0.3\textwidth}
 \begin{center}
  \begin{tikzpicture}[scale=0.8]
\renewcommand*{\locationOfNode}{0}
	\renewcommand*{\locationOfStart}{2}
	\renewcommand*{\heigthofedges}{.75}
	\coordinate (sup) at (0,0);	
	\coordinate (dOne) at (-\locationOfStart,0);
	\coordinate (daOne) at (\locationOfStart,\heigthofedges)  ;
	\coordinate (daN) at (\locationOfStart,-\heigthofedges)  ;
	\coordinate (vL) at (-\locationOfNode,0) ;
	\coordinate (vR) at (\locationOfNode,0)  ;
	\draw[<-] ($(dOne)+(sup)$)--($(vR) + (sup)$)node [near end, above ]  {$d'$} node [very near start, above] {$(M')$};
			\draw (vR)--(daOne)  node [midway, above ]  {$d'_1$};
	\draw (vR)--(daN)  node [midway, above ]  {$d'_{n'}$}; 
	\draw node at ($ 1/2*(daOne) + 1/2*(daN) +(-1/3,0) $) {\vdots};
	\fill (vR) circle [radius=3pt];
\end{tikzpicture}
               \caption{The diagram $\Gamma_R$}
                \label{fig:splicingexpl:GammaR:intro}
 \end{center}
      \end{subfigure}     
        
        \end{tabular}

        \caption{Splicing a diagram $\Gamma$ along an edge $e$.} \label{fig:splicingintro} \end{figure}
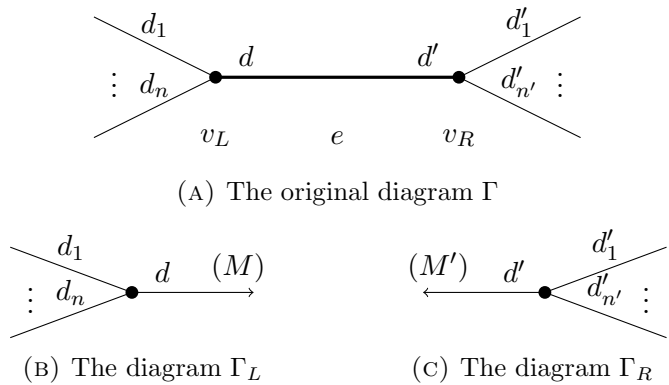   

 N\'emethi and Veys in \cite{NemethiVeys2012} applied this splicing  technique to the topological zeta function. 

\subsection*{Topological zeta functions}
 Denef and Loeser introduced the topological zeta function in \cite{DenefLoeser92} as follows: 
              let $E_j, j\in J$, be the irreducible components of $\pi^{-1}( f^{-1}(0) )$, $N_j$
the multiplicity of $\pi^*f$ along $E_j$ in  and $\nu_j-1 $ the multiplicity of $\pi^*(dx\wedge dy)$ along  $E_j$. The (local) topological zeta function is then 
\begin{equation}
  \topZeta{f}{s} = \sum_{\emptyset\neq I \subseteq J}\chitop(\EIO \cap \pi^{-1}(0)) \prod_{i\in I} 
 \frac{1}{N_is +\nu_i} \in \Q(s), \label{eq:definitiontopzetafintro}
\end{equation}
 where  $\EIO = \cap_{i\in I} E_i \setminus\left( \cup_{i \in J\setminus I} E_i \right)$. Using the existence of a minimal resolution, it is obvious that
that this is independent  of the chosen resolution. In general dimension the independence
 was originally proven by a limit argument using the $p$-adic zeta functions $\int_{p\Z_p^2} \abs{f}^s_p \abs{dx}$, where we 
 assumed that $f\in \Q[x,y]$. This can also be shown by using the  weak factorization theorem \cite{MR2013783}\cite{MR1896232} or 
 by considering the topological zeta function as a specialization of 
 the motivic zeta function, which we will discuss further on.

To obtain a splicing formula,  
 N\'emethi and Veys  incorporated a  differential form $\omega$ into the splice diagram and defined a topological zeta function $\topZeta{f,\omega}{s} = \topZeta{\Gamma}{s}$ for such 
splice diagrams (and thus with respect to $\omega$) by using \eqref{eq:definitiontopzetafintro}, where we redefine $\nu_i$ in terms of $\omega$.
This corresponds to considering the $p$-adic zeta functions $\int_{p\Z_p^2} \abs{f}^s_p \abs{\omega}$, given $f\in \Q[x,y]$. 
These zeta functions with respect to a differential form 
were  already introduced  in \cite{ACLM1}, \cite{ACLM2} and \cite{Veyszetafunctionsonsingularvarieties} with a restriction
on the support of $\omega$.  In \cite{VeysMEAndzetafunctionswithdifferential} and \cite{NemethiVeys2012} there is no restriction on the support
but only the topological zeta function is considered.

 \subsection*{Splicing formula and generalized monodromy conjecture}
 N\'emethi and Veys showed then that there is a  nice splicing formula connecting the involved diagrams in Figure 
 \ref{fig:splicingintro} and their topological zeta functions:
 \begin{equation}
 \topZeta{\Gamma}{s} = \topZeta{\Gamma_L}{s} + \topZeta{\Gamma_R}{s} - 
 \frac{1}{ (Ms + i)( M's +i' )} \label{eq:splicingformula:intro}.
\end{equation} These $i$ and $i'$ are also introduced when we splice $\Gamma$ and the involved zeta functions are with respect to some differential form, whose information is contained in 
the diagram.
  They used this to prove the generalized monodromy conjecture, which predicts the existence of a class of
   `allowed' differential 
  forms such that   the following holds:
   \begin{itemize}
  \item for every allowed form $\omega$ and every pole $s_0$ of $\topZeta{f,\omega}{s}$, $\exp(2\pi i s_0)$ is a local
  monodromy eigenvalue of $f$;
  \item $dx\wedge dy$ is allowed;
  \item every local monodromy eigenvalue of $f$ is obtained as a pole of $\topZeta{f,\omega}{s}$ for some allowed $\omega$.
 \end{itemize}

\subsection*{Motivic zeta functions} We consider here the  Grothendieck ring of varieties $\Grothendieckring{\C}$, its localization 
$\LocalizedGrothendieckring{\C}$ with respect to $\lefschetzmotive = \classof{\affinenspace{1}{\C}}$ and the completion $\CompletedGrothendieckRing{\C}$. 
We denote by $\classof{X}$ the class of the variety $X$. Also $\njets{n}{X}$ will be the scheme of $n$-jets of a variety $X$ and 
$\arcspace{X}$ will be the arc space of $X$. All of this  will be  introduced   in 
  more detail in Section \ref{section:motivicrings}.
As mentioned before, the topological zeta function can also  be considered as an 
avatar of the (local) motivic zeta function $\motivicZeta{f} \in \LocalizedGrothendieckring{\C}[[T]]$.  This motivic zeta function is defined by
 $
 \motivicZeta{f} := \sum_{n> 0} \classof{\motivicZetaFunctioncoef{n}} \lefschetzmotive^{-dn } T^n \in \LocalizedGrothendieckring{\C}[[T]]
$
where $ \motivicZetaFunctioncoef{n} := \{\varphi \in \njets{n}{\affinenspace{2}{\C}} \mid \ord_t f_n(\varphi) = n, \ \pi^n_0(\varphi) = 0\}$. 
There is also an explicit formula for  $\motivicZeta{f}$ in terms of a log resolution
similar to the one in \eqref{eq:definitiontopzetafintro}.
 
We introduce here a motivic zeta function with respect to a differential form. We do this by observing that 
$ \classof{\motivicZetaFunctioncoef{n}} \lefschetzmotive^{-dn }$
is the (naive) measure of a cylinder $\arcsoforder{n}$ in  the arc space $\arcspace{\affinenspace{2}{\C}}$. After defining 
a new motivic  measure $\mu_\omega$ with values in $\CompletedGrothendieckRing{\C}$, we obtain a motivic
zeta function   $\motivicZetaWithDifferentialForm{f}{\omega} := 
 \sum_{n > 0} \mu_\omega( \arcsoforder{n})  T^n \in \CompletedGrothendieckRing{\C}[[T]]$.
For this motivic zeta function $\motivicZetaWithDifferentialForm{f}{\omega}$ there exists again a formula:
\begin{equation}
  \motivicZetaWithDifferentialForm{f}{\omega} = \sum_{\emptyset\neq I \subset J} (\lefschetzmotive - 1)^{\lvert I\rvert }
 \classof{\EIO\cap \pi^{-1}(0)} \prod_{i\in I} \frac{T^{N_i}}{\lefschetzmotive^{v_i} - T^{N_i}} \in 
 \CompletedGrothendieckRing{\C}[[T]] \label{eq:defmotivic:intro} .
\end{equation}

We want to extend  the definition of the motivic zeta function to the case of splice diagrams, but it turns out to be a bit hard.
 To ease our
notation and to simplify our proofs, we introduce the notion of a {\it diagram}: this is actually the same as a splice diagram except we can choose how many nodes (of valency two)
we remove (instead of all of them). A diagram where no nodes were removed is called realizable and refining a diagram is adding nodes
again to the diagram to obtain a new diagram. The definition in \eqref{eq:defmotivic:intro} for
realizable diagrams then extends to all diagrams by 
choosing any refinement.

This leads us to our theorem.
\begin{theorem*}Consider a diagram  $\Gamma$  
and the splicing of   $\Gamma$ into $\Gamma_L$ and $\Gamma_R$. Then  we have
\[
\motivicZeta{\Gamma} = \motivicZeta{\Gamma_L} + \motivicZeta{\Gamma_R} - \frac{(\lefschetzmotive - 1)^2 T^{M + M'}}{ (\lefschetzmotive^i - T^M)(\lefschetzmotive^{i'} - T^{M'})}.
\]
\end{theorem*}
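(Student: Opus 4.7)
The plan is to apply the explicit log-resolution formula \eqref{eq:defmotivic:intro} to each of $\motivicZeta{\Gamma}$, $\motivicZeta{\Gamma_L}$ and $\motivicZeta{\Gamma_R}$ and to compare the three resulting sums term by term. Since the motivic zeta function of a diagram is well-defined through any refinement, I would first refine $\Gamma$, $\Gamma_L$ and $\Gamma_R$ so that the edge $e$ in $\Gamma$ corresponds to a single exceptional component $E_e$ with pair $(N_e, v_e)$. The index set $J$ for $\Gamma$ then decomposes as $\mathcal{E}_L \sqcup \{E_e\} \sqcup \mathcal{E}_R$, while those for $\Gamma_L$ and $\Gamma_R$ take the form $\mathcal{E}_L \sqcup \{L^\ast\}$ and $\mathcal{E}_R \sqcup \{R^\ast\}$, where $L^\ast$ and $R^\ast$ are the new arrow-leaves carrying data $(M, i)$ and $(M', i')$ respectively.

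Grouping each sum according to its interaction with $\{E_e\}$, $\{L^\ast\}$ and $\{R^\ast\}$, the identity splits into three kinds of matching. First, contributions from subsets $I \subseteq \mathcal{E}_L$ are identical in $\motivicZeta{\Gamma}$ and $\motivicZeta{\Gamma_L}$, since the invariants $(N_\bullet, v_\bullet)$ and the class $[\EIO \cap \pi^{-1}(0)]$ for such $I$ depend only on a local neighborhood in $\Gamma$ left unchanged by the splicing; the mirror statement holds for $\mathcal{E}_R$ and $\Gamma_R$. This part of the verification is essentially bookkeeping.

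The core of the argument is the matching of terms whose index set touches $E_e$, $L^\ast$ or $R^\ast$. A subset of the form $I_L \cup \{E_e\}$ with $I_L \subseteq \mathcal{E}_L$ nonempty in $\motivicZeta{\Gamma}$ should correspond to $I_L \cup \{L^\ast\}$ in $\motivicZeta{\Gamma_L}$, with the denominator factor $\frac{T^{N_e}}{\lefschetzmotive^{v_e} - T^{N_e}}$ replaced by $\frac{T^M}{\lefschetzmotive^i - T^M}$; the defining relations for $M$ and $i$ coming from the splicing procedure are precisely set up so that this replacement is valid once one restricts to the left side of $e$, and an analogous matching works on the right. The classes $[\EIO \cap \pi^{-1}(0)]$ differ slightly between $\Gamma$ and $\Gamma_L$ because in $\Gamma$ the curve $E_e$ has intersections with the right half of the diagram that are absent in $\Gamma_L$; this shift accounts for the adjustments in the combinatorics. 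After all these matchings, the residual terms on the two sides should differ by exactly the stated correction. The shape of that correction, namely $(\lefschetzmotive - 1)^2$ times the trivial class $1$ times $\frac{T^M}{\lefschetzmotive^i - T^M} \cdot \frac{T^{M'}}{\lefschetzmotive^{i'} - T^{M'}}$, strongly suggests it arises as a ``size-two inclusion--exclusion'' contribution attached to the virtual pair $\{L^\ast, R^\ast\}$ one obtains when gluing the two spliced diagrams back across $e$.

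The main obstacle will be this last step, and in particular the detailed tracking of the classes $[\EIO \cap \pi^{-1}(0)]$ together with the numerical identities that tie $(N_e, v_e)$ to $(M, i)$ and $(M', i')$ via the edge-determinant conventions of splice diagrams. I expect the cleanest execution to reduce first to the case where $e$ corresponds to a single component (by telescoping along the chain representing $e$ under refinement, or equivalently by induction on its length) and then to parallel the corresponding formal manipulation from the topological setting, now carried out in $\CompletedGrothendieckRing{\C}[[T]]$ in place of $\Q(s)$.
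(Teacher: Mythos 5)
Your plan shares the starting point of the paper's proof — pass to realizable refinements and compare the explicit resolution-formula contributions — and you correctly identify where the trouble lies: the contributions supported away from the spliced edge match immediately, and everything interesting concentrates at the interface. However, the argument as written has a genuine gap exactly at the step you flag as the main obstacle. You assert that the residual after all the matchings ``should differ by exactly the stated correction'' and that the shape of the correction ``strongly suggests'' it is a size-two inclusion--exclusion contribution from a virtual pair $\{L^\ast,R^\ast\}$, but this is not an argument; with the classes $[\EIO\cap\pi^{-1}(0)]$ and the valency shifts $\delta_{v_R}$, $\delta_{v_L}$ differing between $\Gamma$, $\Gamma_L$, $\Gamma_R$, and with the edge $e$ in general refining to a chain of several components rather than the single $E_e$ you posit, the residual is a sum over a whole subdiagram (the chain plus the two endpoint nodes and their arrows), not a single term. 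Your ``telescope / induct on the chain length'' remark acknowledges this but does not resolve it.

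The paper's proof closes exactly this gap with one clean extra idea that your plan circles around without quite reaching. Rather than computing the residual head-on, it introduces an auxiliary diagram $\tilde{\Gamma}$ consisting of the edge $\tilde e$ together with arrows carrying $(M,i-1)$ and $(M',i'-1)$, and observes that $\tilde{\Gamma}$ is the diagram of the toric resolution of $f=x^My^{M'}$, $\omega = x^{i-1}y^{i'-1}\,dx\wedge dy$. This identification computes $\motivicZeta{\tilde\Gamma}$ in closed form as the correction term in the statement. The theorem is then equivalent to the additive identity
\[
\motivicZeta{\Gamma} + \motivicZeta{\tilde\Gamma} = \motivicZeta{\Gamma_L} + \motivicZeta{\Gamma_R},
\]
and this is proved by choosing realizable refinements with a \emph{common} refinement $B$ of the edge $e=\tilde e$, so that $\Gamma'$ decomposes into subdiagrams $A,B,C$, $\tilde\Gamma'$ into $D,B,E$, $\Gamma_L'$ into $A,B,E$, and $\Gamma_R'$ into $D,B,C$. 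Both sides of the identity are sums over the same multiset of subdiagrams $\{A,B,B,C,D,E\}$, and the diagram formula is a sum over nodes, edges, and arrows, so it suffices to check that the corresponding nodes carry the same pairs $(N_v,\nu_v)$, which follows directly from the multiplicity formulas \eqref{eq:formulafornodes}, \eqref{eq:formulafordifferentialform} and \eqref{eq:splicing:multiplicityoff}--\eqref{eq:splicing:multiplicityofomegap}. This is the piece your plan is missing: rather than trying to show a complicated residual collapses to one term, reorganize the identity so that both sides are manifestly term-by-term equal, and identify the correction term \emph{a priori} as the zeta function of a concrete monomial.

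A smaller inaccuracy in your setup: after splicing as in the paper's Figure \ref{fig:splicingexpl}, the diagram $\Gamma_L$ retains \emph{both} $v_L$ and $v_R$ (with $v_R$ carrying the new arrows), so the index set is not simply $\mathcal{E}_L\sqcup\{L^\ast\}$; the valency $\delta_{v_R}$ changes under splicing, which breaks your claim that contributions from $I\subseteq\mathcal{E}_L$ are literally identical in $\motivicZeta{\Gamma}$ and $\motivicZeta{\Gamma_L}$. The paper's $A/B/C/D/E$ bookkeeping is designed precisely to keep this straight.
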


This formula specializes to \eqref{eq:splicingformula:intro}.

  \subsection*{Splice diagrams and monodromic motivic zeta functions} Consider $\muhat = \inverselimit_n\mu_n$, where 
  $\mu_n$ is the group of $n$-th roots of unity and the localized monodromic Grothendieck ring $\monodromicLocalizedGrothendieckring{\C}$. 
  This $\monodromicLocalizedGrothendieckring{\C}$ will be defined in more detail in Section \ref{section:motivicrings}.
The next step is to add a $\muhat $-action to the motivic zeta function, which is done by considering 
the monodromic motivic zeta function  
$\monodromicmotivicZetaFunction{f} := 
 \sum_{n > 0} \classof{\monodromicmotivicZetaFunctioncoef{n}} \lefschetzmotive^{-dn} T^n \in 
 \monodromicLocalizedGrothendieckring{\C}[[T]]$, where 
$ \monodromicmotivicZetaFunctioncoef{n} := \{\varphi \in \njets{n}{\affinenspace{2}{\C}} \mid   
 f_n(\varphi) \equiv  t^n \mod(t^{n+1}), \pi^n_0(\varphi) = 0\}$. The action of $\mu_n$
 on $\monodromicmotivicZetaFunctioncoef{n}$ is defined 
  by $a \cdot \varphi(t) = \varphi(at)$, where  $a\in \mu_n $ and $\varphi \in \njets{m}{\affinenspace{n}{\C}}$, 
  which induces an action of $\muhat$. 

  In section \ref{section:motivicrings} we define $\monodromicmotivicZetaFunction{f,\omega}$, which is 
  the monodromic motivic zeta function with respect to a differential form $\omega$.
  It turns out that
we cannot define $\monodromicmotivicZetaFunction{f,\omega}$ in terms of a (splice) diagram:  we will show that 
there exist  $\lambda, \lambda' \in \C\setminus\{0,1\}$ such that
 \[
  \monodromicmotivicZetaFunction{f_\lambda} \neq \monodromicmotivicZetaFunction{f_{\lambda'}},
 \] where $
  f_\lambda = xy^2(x-y)(x-\lambda y) \in \C[x,y]$. 
  For this $\lambda$ and $\lambda'$, we find
  that the associated splice diagrams are the same but  the monodromic motivic  zeta functions are not. Hence
  we cannot define  $\monodromicmotivicZetaFunction{f,\omega}$   in terms of a diagram. 
  
To construct such $\lambda$ and $\lambda'$ we use the Picard morphism constructed by Ekedahl in \cite{Ekedahl2009}.
  In the appendix we prove and explain the details of this construction  since
  \cite{Ekedahl2009} was never published.

 \subsection*{Generalized monodromy conjecture for motivic zeta functions} 
  The straightforward generalization to the motivic zeta functions turns out to be true. We will specify what we mean by a pole in Section \ref{section:mc}.
\begin{corollary*}Consider the set of allowed forms for a diagram $\Gamma$ of $f \in \C[x,y]$. It
satisfies the following conditions:
 \begin{itemize}
  \item for every allowed form $\omega$, every pole of $ \motivicZetaWithDifferentialForm{f}{\omega}$ induces 
  a monodromy eigenvalue. More specifically
  Theorem \ref{thm:generalizedmotivicmonodromyconjecture} holds.
  \item $dx\wedge dy$ is allowed;
  \item every monodromy eigenvalue is obtained as a pole of the motivic zeta function of $f$ with respect to $\omega$.
 \end{itemize}
\end{corollary*}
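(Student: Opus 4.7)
The plan is to lift the N\'emethi--Veys proof for the topological zeta function to the motivic setting by combining the splicing formula proved in the main theorem with the fact that the topological zeta function is a specialization of the motivic one (via $\chitop$). The first step is to fix the precise notion of a \emph{pole} of $\motivicZetaWithDifferentialForm{f}{\omega} \in \CompletedGrothendieckRing{\C}[[T]]$, namely a pair $(N,v)$ such that the factor $\lefschetzmotive^{v} - T^{N}$ must appear in any denominator obtained from a log resolution, and to recall the combinatorial definition of ``allowed'' form from N\'emethi--Veys in terms of inequalities on the $\nu_i$ determined by $\omega$ along the edges of the diagram; this definition is intrinsic to the diagram and does not depend on the ring of coefficients.

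For the first item I would argue by induction on the number of nodes in a refinement of the splice diagram of $f$. The base case of a single-node diagram reduces to the (nearly) monomial situation, where the explicit formula \eqref{eq:defmotivic:intro} together with A'Campo's monodromy formula makes the monodromy statement immediate. For the inductive step, one applies the splicing formula
\[
\motivicZeta{\Gamma} = \motivicZeta{\Gamma_L} + \motivicZeta{\Gamma_R} - \frac{(\lefschetzmotive - 1)^2 T^{M + M'}}{(\lefschetzmotive^i - T^M)(\lefschetzmotive^{i'} - T^{M'})}.
\]
The poles originating from $\motivicZeta{\Gamma_L}$ and $\motivicZeta{\Gamma_R}$ are controlled by the induction hypothesis, once we observe that the allowed condition is preserved by the splicing procedure, exactly as in N\'emethi--Veys. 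The only genuinely new candidate poles are $s_0 = -i/M$ and $s_0 = -i'/M'$ coming from the correction term; these correspond to the exceptional divisor associated to the edge $e$, and A'Campo's formula again identifies $\exp(2\pi i s_0)$ as a monodromy eigenvalue of $f$.

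For the second item, $dx \wedge dy$ is allowed by direct verification: its multiplicities $\nu_i - 1$ along the exceptional components of $\pi$ are the ones appearing in \eqref{eq:definitiontopzetafintro}, and the inequalities defining the allowed class are classical in this case. For the third item, one uses A'Campo's formula to pin down, for each monodromy eigenvalue $\zeta$, an exceptional component $E_i$ whose contribution is responsible for $\zeta$, and then constructs (following the topological proof) an allowed form $\omega_i$ tailored to that $E_i$ so that the factor $\lefschetzmotive^{v_i} - T^{N_i}$ genuinely survives in $\motivicZetaWithDifferentialForm{f}{\omega_i}$.

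The main obstacle is this last non-cancellation: the completed Grothendieck ring $\CompletedGrothendieckRing{\C}$ admits many more relations than $\Q$, so in principle a factor that contributes a pole topologically could disappear motivically. The cleanest way around this is to argue that the Euler-characteristic specialization sends $\motivicZetaWithDifferentialForm{f}{\omega_i}$ to $\topZeta{f,\omega_i}{s}$, so any topological pole lifts to a motivic pole in the sense fixed above; the N\'emethi--Veys choice of $\omega_i$ already produces the required topological pole, and this transfers the statement to the motivic level without extra work.
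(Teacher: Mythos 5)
Your overall strategy matches the paper's: item 1 is exactly Theorem \ref{thm:generalizedmotivicmonodromyconjecture}, item 2 is inherited from N\'emethi--Veys, and item 3 is obtained by noting that the Euler-characteristic specialization sends $\motivicZetaWithDifferentialForm{f}{\omega}$ to $\topZeta{f,\omega}{s}$, so every topological pole must already be present as a pole of the motivic zeta function. That last observation is also the paper's argument for the surjectivity onto monodromy eigenvalues, and your resolution of the ``non-cancellation'' worry (passing through $\chitop$) is the correct and intended one.

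One caution on your sketch of the inductive proof of Theorem \ref{thm:generalizedmotivicmonodromyconjecture}: it is not true that the candidate poles $-i/M$ and $-i'/M'$ arising from the correction term are automatically monodromy eigenvalues of $f$ via A'Campo's formula. The decorations $(M,i)$ and $(M',i')$ are computed from one side of the diagram only and are in general \emph{not} of the form $(N_v,\nu_v)$ for a node $v$ of $\Gamma$ with $\delta'_v\neq 2$, so A'Campo's formula gives no control over them. The role of the allowed condition in N\'emethi--Veys is precisely to guarantee that, after the subtraction in the splicing formula, the potentially bad contributions at $(M,i)$ and $(M',i')$ either cancel against terms in $\motivicZeta{\Gamma_L}$ and $\motivicZeta{\Gamma_R}$ or survive only when the corresponding value is an eigenvalue; this is the residue/cancellation computation at the star-shaped pieces that the paper alludes to with the condition $\alpha_i=-N_i\frac{\nu}{N}+\nu_i=1$. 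Since the corollary only cites Theorem \ref{thm:generalizedmotivicmonodromyconjecture} rather than reproving it, this imprecision does not invalidate your argument for the corollary itself, but as a proof sketch of the theorem it would need that cancellation step to be made explicit.
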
 
However in the case of the monodromic motivic  zeta function we show that this does not hold.
First we give an easy example where an allowed form gives a non-desired pole of the twisted topological zeta function. The twisted topological zeta function will also  be introduced in Section \ref{section:motivicrings}.
This means that it must occur as a pole of the monodromic motivic zeta function and thus the generalized monodromy conjecture cannot
be valid.
Secondly we produce an example which shows that a subset of allowed forms is not sufficient: there exists no 
allowed differential form such that all the poles of the monodromic motivic zeta function induce monodromy eigenvalues and such that
a particular monodromy eigenvalue is obtained.

\vskip1cm
{\it Acknowledgements} I am very grateful to Wim Veys and Johannes Nicaise 
for their valuable suggestions. 

\makeatletter{}\section{Motivic zeta functions and Grothendieck rings\label{section:motivicrings}}
In this section we introduce the necessary Grothendieck rings and zeta functions. A variety will be
a complex algebraic variety, i.e. a reduced separated scheme of finite type over $\C$. The associated category will
be denoted by $\Var_\C$. Also fix a polynomial $f\in \C[x_1,\ldots, x_d]$, which we will also consider as
a morphism $\affinenspace{n}{\C} \to \affinenspace{1}{\C}$. See  \cite{MR1923998} and \cite{denefloeser2001}  
for more background information.

\subsection{Grothendieck rings}
The Grothendieck ring $\Grothendieckring{\C}$ of complex varieties is the abelian
group generated by isomorphism classes of varieties, where we denote the class of a variety $X$ by $\classof{X}$, 
subject to the relations
\begin{center}
  $\classof{X} = \classof{X \setminus Z} + \classof{Z}$, where $X$ is a variety and $Z$ is a closed subvariety of $X$.
\end{center} 

The ring structure is induced by defining $\classof{X} \cdot \classof{Y} = \classof{X \times Y} $
for varieties $X$ and $Y$.
We denote by $\lefschetzmotive$ the class of $\affinenspace{1}{\C}$ 
and the localization $\Grothendieckring{\C}[\lefschetzmotive^{-1}]$ is denoted by 
$\LocalizedGrothendieckring{\C}$.  Consider for every $i \in \N$ the subgroup $F^i$ of $\LocalizedGrothendieckring{\C}$ 
 generated by the elements $\quotientInGrothendieckRing{\classof{X}}{\lefschetzmotive^n}$,  
 where $X$ is a variety and $i \in \N$ such that $\dimension{X} - n \leq  -i$. These subgroups form a descending filtration on 
 $\LocalizedGrothendieckring{\C}$ and 
 its completion $\varprojlim_n (\LocalizedGrothendieckring{\C}\slash F^n)$  is denoted by $\CompletedGrothendieckRing{\C}$.

The group of $n$-th roots of unity is denoted by $\mu_n$. A {\em good} $\mu_n$-action 
 on a variety $X$ is an algebraic group action $\mu_n \times X \to X$
 such that each orbit is contained in an affine open subvariety.
  Consider   the group  $\muhat :=\varprojlim_n \mu_n$. A {\em good}
 $\muhat$-action on $X$ is  an action of $\muhat$ on $X$ which factors through a good  $\mu_n$-action for 
 some $n\in \N$.

 Call   two $\muhat$-actions on varieties $X$ and $Y$  isomorphic if there is an isomorphism
 between $X$ and $Y$ which also preserves  the $\muhat$-action. 
 The monodromic Grothendieck ring  of complex varieties with good $\muhat$-action $\monodromicGrothendieckRing{\C}$ is 
 defined as the abelian group generated by isomorphism classes of varieties with a  good $\muhat$-action,
 where the class of a  variety $X$ with action $\alpha:\muhat \times X \to X$ is denoted by $\classof{X,\alpha}$ or $\classof{X}$, subject to the 
 relations
 \begin{itemize}
  \item $\classof{X} = \classof{X\setminus Z} + \classof{Z}$, where $X$ is a variety with $\muhat$-action, $Z$ is a closed subvariety invariant
   under the action, and the actions on $X\setminus Z$ and $Z$ are induced by the one on $X$;
   \item 
$\classof{ V }= \classof{X \times \affinenspace{n}{\C}}$, where $V \to X$ is vector bundle of rank $n$ over
a variety $X$ with
 a $\muhat$-action which is linear over the action on $X$. (We are following  \cite{MR2106970} here)
 \end{itemize}
 The ring structure can be defined in the same way as before. 
 Again we denote by $\lefschetzmotive$ the class of $\affinenspace{1}{\C}$, where we equip it with the trivial $\muhat$-action
 and we define $\monodromicLocalizedGrothendieckring{\C}$ to
 be the localization $\monodromicGrothendieckRing{\C}[\lefschetzmotive^{-1}]$.
 Analogously for every $i\in \N$ we define $\hat{F}^i$ as  the subgroup generated by the elements $\quotientInGrothendieckRing{\classof{X}}{\lefschetzmotive^n}$  
 where $X$ is a variety such that $\dimension{X} - n \leq  -i$. Its completion 
  $\varprojlim_n (\monodromicLocalizedGrothendieckring{\C}\slash \hat{F}^n)$  is the monodromic completed Grothendieck ring $ \monodromicCompletedGrothendieckRing{\C}$.

\subsection{$n$-jets and  arcs } Fix a variety $X$ of dimension $d$ in this subsection and let $n\in \N$.
 Recall that the functor 
 \[
  \Sch_\C \to \Set : Y \mapsto X\left(Y\times_\C \Spec\left( \frac{\C[t]}{(t^{n+1})}\right) \right) = \Hom_\C \left(Y\times_\C \Spec\left(  \frac{\C[t]}{(t^{n+1})}\right),X \right)
 \]
is representable by a scheme  $\njets{n}{X}$. 
In particular we have $\njets{n}{X} (F) = X\left(\frac{F[t]}{(t^{n+1})}\right)$ for any field extension $F$ of $\C$. 
This scheme is called the scheme of $n$-jets of $X$. Remark also that $\njets{0}{X} = X$.

The truncation morphisms $\pi_m^n : \njets{n}{X} \to \njets{m}{X}$ are affine, where $n \geq m$,
and thus
we can consider the scheme $\arcspace{X} = \varprojlim_n \njets{n}{X}$, which is called the arc space of $X$. It is equipped
with projection morphisms $\pi_n : \arcspace{X} \to \njets{n}{X}$.
A morphism $f: X\to Y$ of algebraic varieties induces
morphisms $f_n : \njets{n}{X} \to \njets{n}{Y}$ and $f: \arcspace{X} \to \arcspace{Y}$, 
which are compatible with the projection
morphisms.

\subsection{Motivic zeta function} We  consider
\[
 \motivicZetaFunctioncoef{n} := \{\varphi \in \njets{n}{\affinenspace{d}{\C}} \mid \ord_t f_n(\varphi) = n, \ \pi^n_0(\varphi) = 0\}
\] for $n \in \N$. This is a locally closed subset of $\njets{n}{\affinenspace{d}{\C}} $ and thus defines 
a class $\classof{\motivicZetaFunctioncoef{n} }$ in $\Grothendieckring{\C}$.
The (local) motivic zeta function of $f$ (at the origin 0) is then defined as
\[
 \motivicZeta{f} := \sum_{n> 0} \classof{\motivicZetaFunctioncoef{n}} \lefschetzmotive^{-dn } T^n \in \LocalizedGrothendieckring{\C}[[T]].
\]

Our goal now is to incorporate a (regular) differential  form $\omega$ of maximal degree in this definition. 
This is done in \cite{ACLM1}, \cite{ACLM2} and \cite{Veyszetafunctionsonsingularvarieties} with a restriction
on the support of $\omega$. In \cite{VeysMEAndzetafunctionswithdifferential} and \cite{NemethiVeys2012} there is no restriction on the support
but only the topological zeta function is used.
We will show how we can see the coefficients of the motivic zeta function as the motivic measure of a 
subset of $\arcspace{\affinenspace{d}{\C}}$.  Assume that $X$ is a smooth variety of dimension $d$.
Let $C$ be the boolean algebra of cylindrical subsets of $\arcspace{X}$, 
i.e. subsets of the form $\pi_n^{-1}(A')$, where $A'$ is
a constructible subset  of $\njets{n}{X}$ for some $n\in \N$. 
Recall that for cylindrical subsets $A $ we have
that $\pi_n(A) \lefschetzmotive^{-dn} \in \LocalizedGrothendieckring{\C}$ 
stabilizes if $n$ tends to infinity;   we denote this element by $\mu(A)$. This is called the
(naive) motivic measure of $A$. Define $ \arcsoforder{n}$ to be
$ \{ \varphi \in \arcspace{\affinenspace{d}{\C}} \mid \ord_t f(\varphi) = n, \pi_0(\varphi) = 0\}$
and note that  $\mu( \arcsoforder{n}) = \classof{\motivicZetaFunctioncoef{n}} \lefschetzmotive^{-dn}$. Hence
the coefficients of the  motivic zeta function are the measures of $\arcsoforder{n}$.

Consider now a regular differential form $\omega$  of maximal degree on $X$. Remark that $\divisor(\omega)$
is a divisor and consider 
$\arcsoforderagainstdifform{e} = \{ \varphi \in \arcspace{X} \mid \ord_t \divisor(\omega)(\varphi) = e\}$, which
is a cylindrical subset. We define
\[
 \mu_\omega : C \to \CompletedGrothendieckRing{\C} : A \mapsto \sum_{e\in \mathbb{N} } \mu(A \cap \Delta_e)
 \lefschetzmotive^{-e}
\]
and we call this the motivic measure with respect to $\omega$. This sum converges in $\CompletedGrothendieckRing{\C}$ since 
there exists an $i\in \N$ such that 
$ \mu(A \cap \Delta_e)\lefschetzmotive^{-i} \in F^0$  for all $e \in \N$.

 \begin{definition} The (local)  motivic zeta function of $f$ with respect to $\omega$ (at 0) is
  \[
 \motivicZetaWithDifferentialForm{f}{\omega} := 
 \sum_{i > 0} \mu_\omega( \arcsoforder{n})  T^i \in \CompletedGrothendieckRing{\C}[[T]].\]
\end{definition}
We find that  $\motivicZetaWithDifferentialForm{f}{\omega}$ coincides with $\motivicZeta{f}$ if
$\omega $ is the standard form $dx_1 \wedge \cdots \wedge dx_n$ and if 
we consider the coefficients in $\CompletedGrothendieckRing{\C}$. 

We also have a formula in terms of an embedded resolution. Let $\pi: X \to \affinenspace{n}{\C}$ be an embedded resolution
of singularities of $f$ and $\omega$, i.e. a proper morphism such that
$\pi^{-1}(f^{-1}(0) \cup \supp{\omega})$ is a strict normal crossing divisor and such that is an isomorphism outside 
$  f^{-1}(0) \cup \supp{\omega}$.
Let $E_j, j\in J$, be the irreducible components of $\pi^{-1}(f^{-1}(0)\cup \supp{\omega})$, $N_j$
the multiplicity of $E_j$ in $\pi^*f$ and $\nu_j-1 $ the multiplicity of $\pi^*\omega$ along $E_j$. Remark that $(N_j,\nu_j) \neq (0,0)$
for all $j\in J$, but $N_j=0$ is possible for some $j\in J$ since it can happen that $\supp(\omega)\not \subset f^{-1}(0)$. 

\begin{theorem}\label{theorem:denefformulamotiviczetafunction}

Define $\EI = \cap_{i\in I} E_i$ and $\EIO = \cap_{i\in I} E_i \setminus\left( \cup_{i \in J\setminus I} E_i \right)$ for $I \subset J$.
Then we have
 \[
 \motivicZetaWithDifferentialForm{f}{\omega} = \sum_{\emptyset\neq I \subset J} (\lefschetzmotive - 1)^{\lvert I\rvert }
 \classof{\EIO\cap \pi^{-1}(0)} \prod_{i\in I} \frac{T^{N_i}}{\lefschetzmotive^{\nu_i} - T^{N_i}} \in 
 \CompletedGrothendieckRing{\C}[[T]]
 \]
\end{theorem}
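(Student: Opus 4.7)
The plan is to reduce to an integral on the resolution $\pi : X \to \affinenspace{d}{\C}$ via the motivic change of variables, and then compute stratum-wise along the $\EIO$. Starting from $\motivicZetaWithDifferentialForm{f}{\omega} = \sum_{n>0}\mu_\omega(\arcsoforder{n})T^n$, I would first reinterpret the coefficients on $\arcspace{X}$. Since $\pi$ is proper birational and an isomorphism off $f^{-1}(0)\cup \supp\omega$, the induced map on arcs is a bijection off a subset of infinite motivic codimension. Using the identity $\ord_t\divisor(\pi^*\omega) = \ord_t(\divisor(\omega)\circ\pi) + \ord_t\jac(\pi)$ together with the standard change of variables $d\mu = \lefschetzmotive^{-\ord_t\jac(\pi)}d\mu$ under $\pi_*$, the Jacobian factor combines with $\lefschetzmotive^{-\ord_t(\divisor(\omega)\circ\pi)}$ to give
\[
 \mu_\omega(\arcsoforder{n}) = \mu_{\pi^*\omega}\bigl(\{\psi\in\arcspace{X}:\pi_0(\psi)\in \pi^{-1}(0),\ \ord_t(f\circ\pi)(\psi)=n\}\bigr).
\]

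Next, I would stratify this cylinder according to which $E_i$ contain $\psi(0)$, obtaining a partition indexed by nonempty $I\subset J$ with pieces lying over $\EIO\cap\pi^{-1}(0)$. Around any point of $\EIO$ the strict normal crossings hypothesis provides local coordinates $(y_1,\ldots,y_d)$ on $X$ with $E_i=\{y_i=0\}$ for $i\in I$, $f\circ\pi = u\prod_{i\in I}y_i^{N_i}$ and $\pi^*\omega = v\bigl(\prod_{i\in I}y_i^{\nu_i-1}\bigr)dy_1\wedge\cdots\wedge dy_d$ with $u,v$ local units. For an arc $\psi$ with $k_i := \ord_t\psi^*(y_i)\ge 1$ for $i\in I$ one then reads off $\ord_t(f\circ\pi)(\psi) = \sum_{i\in I}N_ik_i$ and $\ord_t\divisor(\pi^*\omega)(\psi) = \sum_{i\in I}(\nu_i-1)k_i$. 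A direct jet count (each $y_i$ with $i\in I$ contributes $(\lefschetzmotive-1)\lefschetzmotive^{n-k_i}$ to $\njets{n}{X}$, each $y_j$ with $j\notin I$ contributes $\lefschetzmotive^n$, and one divides by $\lefschetzmotive^{dn}$) shows that the set of such arcs forms a cylinder of measure $\classof{\EIO\cap\pi^{-1}(0)}(\lefschetzmotive-1)^{\lvert I\rvert}\lefschetzmotive^{-\sum_{i\in I}k_i}$.

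Incorporating the weight $\lefschetzmotive^{-\sum_{i\in I}(\nu_i-1)k_i}T^{\sum_{i\in I}N_ik_i}$ coming from $\mu_{\pi^*\omega}$ and the zeta function, the contribution of the stratum indexed by $I$ is
\[
 \classof{\EIO\cap\pi^{-1}(0)}(\lefschetzmotive-1)^{\lvert I\rvert}\prod_{i\in I}\sum_{k_i\ge 1}\bigl(\lefschetzmotive^{-\nu_i}T^{N_i}\bigr)^{k_i} = \classof{\EIO\cap\pi^{-1}(0)}(\lefschetzmotive-1)^{\lvert I\rvert}\prod_{i\in I}\frac{T^{N_i}}{\lefschetzmotive^{\nu_i}-T^{N_i}},
\]
and summing over $\emptyset\neq I\subset J$ gives the stated formula.

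The main technical point is to justify the double sum $\sum_n\sum_e\mu(\arcsoforder{n}\cap\arcsoforderagainstdifform{e})\lefschetzmotive^{-e}T^n$ and the above factorization rigorously in $\CompletedGrothendieckRing{\C}[[T]]$; this requires controlling the motivic dimension of $\arcsoforder{n}\cap\arcsoforderagainstdifform{e}$ in terms of $e$ using the resolution, so that the factors $\lefschetzmotive^{-e}$ force convergence in the completion. The (harmless) possibility that $N_j = 0$ or $\nu_j = 1$ for some $j$ is absorbed by the condition $(N_j,\nu_j)\neq(0,0)$ and the strict positivity $k_i\ge 1$, so the geometric series above converges formally and the manipulation is valid. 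Modulo this bookkeeping, which is standard after \cite{denefloeser2001}, the stratum-wise computation proceeds verbatim.
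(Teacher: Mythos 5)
Your argument is correct and is exactly the standard Denef--Loeser change-of-variables computation that the paper itself invokes by citing \cite[Theorem 2.4]{MR1923998} without writing out the details. You have simply filled in the stratification by $\EIO$, the local normal-crossings jet count yielding $\classof{\EIO\cap\pi^{-1}(0)}(\lefschetzmotive-1)^{\lvert I\rvert}\lefschetzmotive^{-\sum k_i}$, and the geometric-series summation (with the observation that $(N_j,\nu_j)\neq(0,0)$ ensures convergence in $\CompletedGrothendieckRing{\C}[[T]]$ even when some $N_j=0$), all of which is the intended proof.
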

\begin{proof}
Using the techniques of \cite[Theorem 2.4]{MR1923998} one can show this easily.
\end{proof}

\subsection{Monodromic motivic zeta function}
 Consider \[
 \monodromicmotivicZetaFunctioncoef{n} := \{\varphi \in \njets{n}{\affinenspace{d}{\C}} \mid   
 f_n(\varphi) \equiv  t^n \mod\ (t^{n+1}), \pi^n_0(\varphi) = 0\}.
\] for $n \in \N$. This is a closed subset of $ \njets{n}{\affinenspace{d}{\C}}$.
We have a (natural) $\mu_n$-action defined by
$a \cdot \varphi(t) = \varphi(at)$, where $a\in \mu_n $ and $\varphi \in \njets{n}{\affinenspace{d}{\C}}$. Hence
 we   can consider $\classof{\monodromicmotivicZetaFunctioncoef{m}}$ in  $\monodromicLocalizedGrothendieckring{\C}$. The 
 monodromic motivic zeta function of $f$ is then
\begin{equation}\label{eq:monodromicmotiviczetafunctionwithoutdiff}
 \monodromicmotivicZetaFunction{f} := 
 \sum_{n > 0} \classof{\monodromicmotivicZetaFunctioncoef{n}} \lefschetzmotive^{-dn} T^n \in 
 \monodromicLocalizedGrothendieckring{\C}[[T]].
\end{equation}

Look   at   $\arcsoforderandac{n} = \{\varphi \in \arcspace{\affinenspace{d}{\C}} \mid f(\varphi) \equiv  t^n \mod(t^{n+1}), \pi_0(\varphi) = 0 \}$. This
has an action of $\mu_n$ like before.
Then $\arcsoforderandac{n}\cap \Delta_e$ is a cylindrical subset for every $e\in \mathbb{N}$
and the action induces an action on $\pi_m( \arcsoforderandac{n}\cap \Delta_e)$ for every $m\in \N$. 
The sequence $\pi_m(\arcsoforder{n}\cap \Delta_e)\lefschetzmotive^{-md}$
stabilizes in $\monodromicCompletedGrothendieckRing{\C}$ if $m$ tends to $\infty$ and we denote this element
by $\mu( \arcsoforderandac{n}\cap \Delta_e)$. 
This leads us to 
\[
 \mu_\omega (\arcsoforderandac{n}) = \sum_{e\in \mathbb{N} } \mu(\arcsoforderandac{n} \cap \Delta_e)
 \lefschetzmotive^{-e} \in \monodromicCompletedGrothendieckRing{\C}.
\] This sum converges in $\monodromicCompletedGrothendieckRing{\C}$ by the same argument as before.

\begin{definition}
 The (local) monodromic motivic zeta function of $f$ with respect to $\omega$ (at the origin) is  
\[
 \monodromicmotivicZetaFunctionT{f, \omega}{T} := \sum_{i> 0}\mu_\omega (\arcsoforderandac{i}) T^i \in  \monodromicCompletedGrothendieckRing{\C} [[T]].
\]

\end{definition}
Remark that this definition coincides again with the one in (\ref{eq:monodromicmotiviczetafunctionwithoutdiff}) 
if $\omega $ is the standard form and we consider the coefficients in $ \monodromicCompletedGrothendieckRing{\C}$.

We also have a formula for this zeta function. Consider again the situation of Theorem \ref{theorem:denefformulamotiviczetafunction}.
 Suppose  $\emptyset \neq I\subseteq J$. Define $m_I = \gcd_{i \in I}(N_i)$. We will 
 introduce $\EIOT$ as a unramified Galois cover of $\EIO$ with Galois group 
 $\mu_{m_I}$. Let $U$ be an affine Zariski open such that $f\circ h = u v^{m_I}$,
 where $u$ is a unit and $v$ a regular function on  $U$. Then the restriction
 of $\EIOT$ above $\EIO\cap U$ is
 defined as
 \[
  \{(z,y) \in \affinenspace{1}{\C} \times U \mid z^{m_I} = u^{-1}\}.
 \]
 Since another choice of $u$ and $v$ induces an isomorphism, 
 these covers glue to a 
 finite Galois cover $\EIOT$ of  $\EIO$. The (natural) $\mu_{m_I}$-action is obtained 
 by multiplying the $z$-coordinates with elements of $\mu_{m_I}$, which gives us an element  
 $\classof{\EIOT}$ in  $\monodromicGrothendieckRing{\C}$.

 \begin{theorem}\label{theorem:denefformulamonodromicmotiviczetafunction}
  We have the following equality:
 \[
 \motivicZetaWithDifferentialForm{f}{\omega} = \sum_{\emptyset\neq I \subset J} (\lefschetzmotive - 1)^{\lvert I\rvert -1}
 \classof{\EIOT\cap \pi^{-1}(0)} \prod_{i\in I} \frac{T^{N_i}}{\lefschetzmotive^{\nu_i} - T^{N_i}}
 \in \monodromicCompletedGrothendieckRing{\C}[[T]].
 \]
 \end{theorem}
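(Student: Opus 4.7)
The plan is to adapt the proof of Theorem \ref{theorem:denefformulamotiviczetafunction} while carefully tracking the $\muhat$-action imposed by the angular-component condition. I would start from
\[
\monodromicmotivicZetaFunctionT{f,\omega}{T}=\sum_{n>0}\sum_{e\geq 0}\mu(\arcsoforderandac{n}\cap\arcsoforderagainstdifform{e})\lefschetzmotive^{-e}\,T^n
\]
and pull the computation to $\arcspace{X}$ via the change-of-variables formula for motivic integrals. The condition $f(\varphi)\equiv t^n\pmod{t^{n+1}}$ pulls back to $\ord_t\pi^*f=n$ together with the angular-component condition $\mathrm{ac}(\pi^*f)=1$, and the Jacobian of $\pi$ is absorbed into the integrand $\lefschetzmotive^{-\ord\pi^*\omega(\tilde\varphi)}$ on $X$.

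I would then stratify the resulting integration domain by the index set $I\subseteq J$ of components $E_i$ that the arc $\tilde\varphi$ meets, and by the tuple $(n_i)_{i\in I}\in\Z_{>0}^I$ of orders along these components. On an affine chart where $\pi^*f=u\prod_{i\in I}x_i^{N_i}$ and $\pi^*\omega=v\prod_{i\in I}x_i^{\nu_i-1}\,dx_1\wedge\cdots\wedge dx_d$ with $u,v$ units, the standard Denef--Loeser computation expresses the locus with fixed $(n_i)$ as a trivial bundle over $\EIO\cap\pi^{-1}(0)$, and summing the geometric series in each $n_i$ produces the factor $\prod_{i\in I}T^{N_i}/(\lefschetzmotive^{\nu_i}-T^{N_i})$, multiplied by the class of the torus $(\C^\times)^I$ of angular components of the $x_i$.

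The new ingredient is that the angular-component condition cuts this torus down. Writing $N_i=m_Ik_i$ with $\gcd_{i\in I}(k_i)=1$ and setting $w=\prod_{i\in I}\mathrm{ac}(x_i)^{k_i}$, the equation $\mathrm{ac}(\pi^*f)=1$ becomes $w^{m_I}=u(\tilde\varphi(0))^{-1}$, which is exactly the equation locally defining $\EIOT\to\EIO$. The residual choice of $(\mathrm{ac}(x_i))_{i\in I}$ with $w$ fixed is a $(|I|-1)$-dimensional torus, so the locus of admissible angular components is a Zariski-locally trivial $(\C^\times)^{|I|-1}$-bundle over $\EIOT\cap\pi^{-1}(0)$, of class $(\lefschetzmotive-1)^{|I|-1}\classof{\EIOT\cap\pi^{-1}(0)}$. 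The $\mu_n$-action $\varphi(t)\mapsto\varphi(at)$ scales $\mathrm{ac}(x_i)$ by $a^{n_i}$, hence scales $w$ by $a^{\sum_i k_in_i}=a^{n/m_I}$; this is precisely the canonical $\mu_{m_I}$-action on the $z$-coordinate of $\EIOT$, so the identification is equivariant.

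Assembling these contributions replaces the factor $(\lefschetzmotive-1)^{|I|}\classof{\EIO\cap\pi^{-1}(0)}$ of Theorem \ref{theorem:denefformulamotiviczetafunction} by $(\lefschetzmotive-1)^{|I|-1}\classof{\EIOT\cap\pi^{-1}(0)}$ and yields the stated formula. The main obstacle is the chart-by-chart globalization: one must verify that a different local presentation of $\pi^*f$ as $u'(v')^{m_I}$ induces an isomorphism of the constructed $(\C^\times)^{|I|-1}$-bundle which intertwines the $\mu_{m_I}$-actions, so that the class $\classof{\EIOT\cap\pi^{-1}(0)}$ is well-defined in $\monodromicGrothendieckRing{\C}$ and the equivariant motivic change of variables can be applied. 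Once this is in hand, the summation and passage to $\monodromicCompletedGrothendieckRing{\C}$ proceed exactly as in the non-monodromic case \cite[Theorem 2.4]{MR1923998}.
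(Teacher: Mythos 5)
Your proposal is correct and follows exactly the route the paper intends: the one-line proof in the paper just invokes the Denef--Loeser computation of \cite[Theorem 2.4]{MR1923998}, and your sketch is a faithful unpacking of that argument, with the only genuine modification being that the Jacobian of $\pi$ and the pullback of $\omega$ are merged into $\ord_t\pi^*\omega$, which is precisely how the $\nu_i$'s are defined in the paper. The "main obstacle" you flag at the end --- well-definedness of $\EIOT$ and $\mu_{m_I}$-equivariance under a change of local presentation $f\circ\pi = u v^{m_I}$ --- is already addressed in the paper's construction of $\EIOT$ immediately preceding the theorem statement, so it is not an outstanding gap but a point you could simply cite from the setup.
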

\begin{proof}
Using the techniques of \cite[Theorem 2.4]{MR1923998} one can show this easily.
\end{proof}

 \subsection{Topological and other zeta functions}
 Given a ring morphism $\chi: \CompletedGrothendieckRing{\C} \to R$, we can consider the specialization of these motivic zeta functions, i.e.
\[ 
\motivicZetaWithDifferentialForm{f}{\omega} = \sum_{i > 0} \chi(\mu_\omega( \arcsoforder{n}))  T^i \in R[[T]].
\]
 In this sense we can obtain several other and already known zeta  functions such as $ p$-adic zeta functions and Hodge zeta functions.

\subsubsection{Topological zeta function}
The topological zeta function will be used and thus we discuss this incarnation in more detail. 
The topological Euler characteristic $\chitop(X) \in \Z$ of a variety $X$ has the following properties:
\begin{itemize}
 \item $\chitop(X) = \chitop(X\setminus Z) + \chitop(Z) $ for all varieties $X$ and closed subvarieties $Z$ of $X$,
 \item $\chitop(X\times Y) = \chitop(X) \cdot \chitop(Y)$ for all varieties $X$ and $Y$,
 \item $\chitop(\affinenspace{1}{\C}) = 1$
\end{itemize}
This implies that we can consider it as a ring morphism $\chitop : \LocalizedGrothendieckring{\C} \to \Z$. Since 
$\chitop(\lefschetzmotive) = 1$, we cannot extend it to a morphism from $\CompletedGrothendieckRing{\C}$. As discussed in \cite{denefloeser2001}, 
we can still apply $\chitop $ to elements of $\CompletedGrothendieckRing{\C}$ 
which are the image of an element of $\LocalizedGrothendieckring{\C}$.

The local topological zeta function
of $f$ is defined as the rational function
\[
 \topZeta{f,\omega}{s} = \sum_{\emptyset\neq I \subseteq J}\chitop(\EIO\cap \pi^{-1}(0)) \prod_{i\in I} 
 \frac{1}{N_is +\nu_i} \in \Q(s),
\] 
and the twisted local topological zeta function  as
\[
 \topZetaTwisted{f}{\omega}{e}{s} = 
 \sum_{\stackrel{\emptyset\neq I \subseteq J,}{ e\vert m_I}}\chitop(\EIO \cap \pi^{-1}(0)) 
 \prod_{i\in I} \frac{1}{N_i s+ \nu_i} \in \Q(s),
\] where $e\in \N$.

Given a character $\alpha$ of $\muhat$, there is a natural ring homomorphism 
\[
 \chitop(\cdot, \alpha) : \monodromicLocalizedGrothendieckring{\C} \to \Z : X \mapsto \sum_{q\geq 0} \dim H^q(X,\C)_\alpha
\]
where $H^*(X,\C)_\alpha$ is the part of $H^*(X,\C)$ on which $\muhat $ acts by multiplication by $\alpha$.

Remark that there always exists a 
 character $\alpha$
of given order $e$ and that  $\chitop(X,\alpha)$ only depends on $\alpha$. We denote this   by $\chitop^{(e)}(X) = \chitop(X,\alpha) $. 
We can apply $\chitop$   to 
$ \motivicZetaWithDifferentialFormT{f}{\omega}{\lefschetzmotive^{-n}}$ where $n\in \N$  
since it is equal to \[
\sum_{\emptyset\neq I \subset J}  
 \classof{\EIO\cap \pi^{-1}(0)} \prod_{i\in I} 
 \frac{ (\lefschetzmotive - 1)\lefschetzmotive^{-nN_i}}{\lefschetzmotive^{\nu_i} - \lefschetzmotive^{-nN_i}} 
 = \sum_{\emptyset\neq I \subset J} (\lefschetzmotive - 1)^{\lvert I\rvert }
 \classof{\EIO\cap \pi^{-1}(0)} \prod_{i\in I} \frac{1}{\classof{\projectivenspace{ N_in + \nu_i -1}{\C}}}. 
 \]
Another definition of  $\topZeta{f,\omega}{s}$ is then the unique rational function such that 
  \[ \topZeta{f,\omega}{n} = \chitop( \motivicZetaWithDifferentialFormT{f}{\omega}{\lefschetzmotive^{-n}})\]
  for all $n\in \N$. Analogous we have that $\topZetaTwisted{f}{\omega}{e}{s}$ is the unique rational function such that 
  \[   
                            \topZetaTwisted{f}{\omega}{e}{n} = \chitop^{(e)}((\lefschetzmotive-1)\monodromicmotivicZetaFunctionT{f,\omega}{\lefschetzmotive^{-n	}}).
                           \]  for all $n\in \N$. In this sense the (twisted) topological zeta functions are avatars of 
                           the motivic zeta functions.

\makeatletter{}\section{Splice diagrams} 
This section is dedicated to the notion of splice diagrams as described in \cite{NemethiVeys2012}. We will not discuss it
in full generality but rather stick to the case of plane curve singularities. 
Consider a polynomial $f \in \C[x,y]$, a  (regular) differential $2$-form $\omega$ on $\affinenspace{2}{\C}$
and an embedded resolution of singularities $\pi : X \to \affinenspace{2}{\C}$ for $(f,\omega)$. 

The topology of the singularity  can be described by means of the dual graph
$G = G_\pi $ associated to $\pi$, $f$ and $\omega$. 
Let $E_i, i\in J$, be the 
irreducible components of  $\pi^{-1}(  f^{-1}(0) \cup \supp  \omega   )$. We   have then three types of components: exceptional curves,
strict transforms of components of $\{f=0\}$ and strict transforms of components of $\supp(\omega)$. This type is not unique, i.e.
a component can be a strict transform of a component of $\{f=0\}$ and a strict transform of a component of $\supp(\omega)$, 
but this is the only case where a component can have multiple types.

Each exceptional curve $E_i$ determines a vertex of $G$ and edges correspond to the intersection points of the
exceptional curves. Note that this $G$ is a tree, each exceptional curve is rational and $\det (-I(G)) = 1$, where
$I(H)$ is the negative definite intersection matrix $(E_i\cdot E_j)_{i,j\in H}$ if $H$ is a subset of the nodes of $G$.
These are exactly the conditions of an integral homology sphere and thus we can use all the machinery developed in 
\cite{NemethiVeys2012}.

We can now talk about the splice diagram $\Gamma = \Gamma_\pi(f,\omega)$. The underlying graph is 
the one obtained by removing all nodes of $G$ of valency $2$. We add to this graph some decorations. On each pair $(v,e)$
where $v $  is a node of 
$\Gamma$ and  $e$ is an edge starting at $v$, we have the decoration $d_{ve} = \det(-I(G_{ve}))$, where $G_{ve}$ 
is the component of $G\setminus\{v\}$ in the direction of $e$.

Each irreducible component of the strict transform of  $\{f=0\}$ intersecting in the exceptional component $E$
corresponding to the node  $v$ is represented by
an arrow $a$ attached at $v$ and has the multiplicity $N_a$ of $f$ along $E$ as a decoration. 
Similarly 
a   component of a strict transform of   $\supp \omega $ is being displayed by a dotted arrow $a$
and again the multiplicity $\nu_a-1$ is the associated decoration.

It is important to stress that we are only dealing here with plane curve singularities. 
All these can easily be extended to the case 
of  integral homology spheres as in \cite{NemethiVeys2012}.

\begin{example}
 Consider the cusp $f=x^3 - y^2$ , its minimal embedded resolution $\pi$ and differential form 
 $\omega= x^4y^5dx\wedge dy$. Then its splice diagram is the following:
 \begin{center}
 \makeatletter{}\begin{tikzpicture}
	\newcommand*{\locationOfLeft}{3}
	\newcommand*{\heightOfArrow}{2}
	\renewcommand*{\locationOfStart}{4}
	\renewcommand*{\heigthofedges}{1}
	\coordinate (een) at (-\locationOfLeft, 0);
	\coordinate (twee) at (0, 0);
	\coordinate (drie) at (\locationOfLeft, 0);
	\coordinate (vier) at (0, \heightOfArrow);
	\coordinate (vijf) at ( -\locationOfLeft-\heightOfArrow,0);
	\coordinate (zes) at ( +\locationOfLeft+\heightOfArrow,0);
	\draw (een)--(twee) node[very near start, above]{$1$} node[very near end, above]{$3$};
	\draw (twee)--(drie) node[very near start, above]{$2$} node[very near end, above]{$2$};
	\fill (een) circle [radius=3pt];
	\fill (twee) circle [radius=3pt];
	\fill (drie) circle [radius=3pt];
	\draw[thick,->] (twee)--(vier);
	\draw node[right] at (vier) {(1)};
	\draw node[above] at (vijf) {(4)};
	\draw node[above] at (zes) {(5)};
	\draw[dotted, ->] (een)--(vijf);
        \draw[dotted, ->] (drie)--(zes);
	
\end{tikzpicture} 
 \end{center}
\end{example}
We have the following properties of the decorations $\{d_{ve}\}_{e,v}$ of $\Gamma$:  $d_{ve} \geq 1$,
$\{d_{ve}\}_e$ are pairwise coprime for a fixed node $v$ and any edge determinant $q_e$ is positive. 
Recall that this edge determinant is $\det(-I(G_e))$, where $G_e$ are the exceptional curves  of $G$ that lie
on $e$. There is an easier formula. Consider the  situation in Figure 
\ref{fig:edgesituation}.
The edge determinant $q_e$ is then equal to 
$
 dd' - DD'
$  where $D = \prod_{i=1}^nd_i$ and $D'=\prod_{i=1}^{n'}d'_i$.

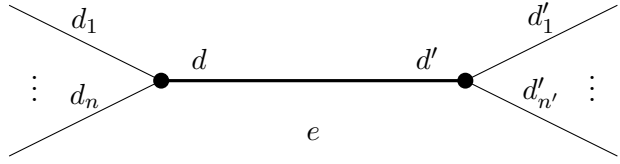
\begin{figure}\begin{center}
\makeatletter{}\begin{tikzpicture}
	\renewcommand*{\locationOfNode}{2}
	\renewcommand*{\locationOfStart}{4}
	\renewcommand*{\heigthofedges}{1}
	\coordinate (dOne) at (-\locationOfStart,\heigthofedges);
	\coordinate (dN) at (-\locationOfStart,-\heigthofedges)  ;
	\coordinate (daOne) at (\locationOfStart,\heigthofedges)  ;
	\coordinate (daN) at (\locationOfStart,-\heigthofedges)  ;
	\coordinate (vL) at (-\locationOfNode,0) ;
	\coordinate (vR) at (\locationOfNode,0)  ;
	\draw (dOne)--(vL) node [midway, above ]  {$d_1$};
	\draw (dN)--(vL)  node [midway, above ]  {$d_n$};
	\draw[very thick] (vL)--(vR) node[very near start, above]{$d$} node[very near end, above]{$d'$};
	\draw (vR)--(daOne)  node [midway, above ]  {$d'_1$};
	\draw (vR)--(daN)  node [midway, above ]  {$d'_{n'}$};
	\draw node at ($ 1/2*(dOne) + 1/2*(dN) +(1/3,0) $) {\vdots};
	\draw node at ($ 1/2*(daOne) + 1/2*(daN) +(-1/3,0) $) {\vdots};
	\draw node at ($1/2*(vR) + 1/2*(vL) + (0,-.7) $){$e$};
		\fill (vR) circle [radius=3pt];
	\fill (vL) circle [radius=3pt];
\end{tikzpicture} \caption{\label{fig:edgesituation}Generic situation}
\end{center}
\end{figure}

It turns out that this splice diagram is very useful for computing the multiplicities of
all the exceptional components. We only need 
the multiplicities at the strict transforms and the decorations on the splice diagram.
 For a node $v$ of $\Gamma$, which corresponds to an exceptional curve,  we have \begin{equation}\label{eq:formulafornodes}
N_v = \sum_{a \text{ arrow}} N_al_{va }                       
                       \end{equation}
where $l_{va}$ is the product of the edge decorations adjacent to the path from $v$ to $a$ but not on it.
Analogously, we have 
\begin{equation}\label{eq:formulafordifferentialform}
 \nu_v = \sum_{w \text{ node}} (2-\delta_w)l_{vw} + \sum_{a \text{ dotted arrow }} l_{va}(\nu_a-1).
\end{equation}
 This $\delta_v$ is the valency of the node $v$ considered without arrows.

 \subsection{Diagrams}
 We introduce now what we will call a diagram. While creating
 splice diagrams, we are deleting nodes of valency two in the graph. But the formulas still work without 
 doing so. Consider now dual graphs of resolutions as before, were we may choose how many valency two nodes we have removed.
 The decorations are still the same.
  From now on we will call this a  {\it diagram}.
  \begin{definition}
We call a diagram $\Gamma$ {\it realizable} if it is a dual graph of a resolution of singularities.  
A diagram with no nodes of valency 2 is called {\it minimally reduced}.
\end{definition}

So a splice diagram is a minimally reduced diagram.  
Of course every diagram can be reduced to a  minimally reduced  diagram by deleting all the nodes of valency two.

\begin{lemma}\begin{enumerate}
              \item If a diagram satisfies  $q_e = 1$ for all edges $e$ and $d_{va}=1$  for all (dotted) arrows $a$ attached at a node $v$,
then it must be realizable. 
\item Every reduced diagram can be `extended' or `refined' to a realizable diagram. This can be done by 
 (re)adding nodes (of valency 2) on those edges with $q_e > 1$ and 
arrows with $d_{va} >1$. 
\item The possible ways to add nodes to the edge $e$ is not unique and corresponds to smooth refinements of the fan associated to the cone 
$(D,d) \mathbb{R}_{\geq 0} + (d',D')\mathbb{R}_{\geq 0} \subseteq \mathbb{R}^2$ where   $D = \prod_{i=1 }^n d_i$ and
$D'= \prod_{i=1}^{n'} d'_i$, where we consider the situation as in Figure \ref{fig:edgesituation}.

             \end{enumerate}
\end{lemma}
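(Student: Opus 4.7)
The plan is to treat all three parts through the toric geometry attached to each edge. In the notation of Figure \ref{fig:edgesituation}, set $D=\prod_i d_i$ and $D'=\prod_i d'_i$, and associate to the edge $e$ the strongly convex rational cone $\sigma_e = (D,d)\mathbb{R}_{\geq 0} + (d',D')\mathbb{R}_{\geq 0} \subseteq \mathbb{R}^2$. A direct $2\times 2$-determinant computation shows that the multiplicity of the sublattice generated by the primitive vectors $(D,d)$ and $(d',D')$ is exactly $dd'-DD'=q_e$. Hence \emph{smoothness of $\sigma_e$ is equivalent to $q_e=1$}. The analogous statement for an arrow $a$ attached at a node is that a single cone of multiplicity $d_{va}$ is smooth iff $d_{va}=1$.

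For part (1), I would argue by induction on the number of nodes. For a single node with all arrow decorations equal to $1$, the diagram is realized by an iterated sequence of point blow-ups starting at the origin, with arrows accounting for transverse strict transforms. For the inductive step pick a leaf-node $v$ of the diagram, attach to the already-realized sub-diagram along the unique edge $e$ leaving $v$, and use the hypothesis $q_e=1$ to produce a blow-up chart glueing the new exceptional chain to the existing configuration so that edge-decoration formulas match. The point is that the smoothness of $\sigma_e$ provides a basis of the toric chart at the intersection point whose coordinate axes record exactly the two adjacent exceptional divisors with the required multiplicities.

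For parts (2) and (3) I would invoke the standard toric classification of smooth refinements of a two-dimensional strongly convex rational cone: these are in bijection with the rays one inserts inside $\sigma_e$ via the Hirzebruch–Jung continued fraction expansion of $q_e$. Inserting one valency-two node on the edge $e$ corresponds precisely to inserting one interior ray, and the two sub-cones produced are exactly the toric cones attached to the two new edges created in the diagram, with new edge decorations read off from the primitive vector used for the subdivision. Iterating gives a refinement with all edge determinants equal to $1$. The same argument, applied to the cone spanned by one primitive vector together with a chosen one of its neighbours, handles arrows with $d_{va}>1$: one adds nodes of valency two along the arrow until its final decoration becomes~$1$. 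Combining these two operations on every edge and every arrow produces a refinement satisfying the hypothesis of part (1), so the refined diagram is realizable.

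The main obstacle will be the bookkeeping: one has to check that when a valency-two node is inserted on an edge $e$, the edge decorations of the two new edges are the right ones so that formulas \eqref{eq:formulafornodes} and \eqref{eq:formulafordifferentialform} (which extend painlessly to diagrams with valency-two nodes) continue to give the same multiplicities $N$ and $\nu$ on the old nodes and arrows. This amounts to verifying that if $(a,b)$ is the inserted primitive ray inside $\sigma_e$, then writing $(a,b)$ in the bases $\{(D,d),(a,b)\}$ and $\{(a,b),(d',D')\}$ yields new edge decorations compatible with the multiplicativity of the $l_{va}$; once this is done, the refinements of a single edge are tautologically in bijection with smooth refinements of $\sigma_e$, giving part (3).
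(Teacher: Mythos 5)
Your proposal is correct and follows essentially the same route as the paper, whose entire proof is the one-line remark that the lemma ``follows from interpreting toric concepts described in \cite[Chapter 10]{MR2810322}.'' You are simply supplying the details left implicit by that citation: the cone $\sigma_e$ whose lattice index equals $q_e$, the equivalence of smoothness with $q_e=1$, and the bijection between valency-two node insertions and Hirzebruch--Jung ray insertions, together with the bookkeeping on edge decorations.
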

 \begin{proof}
 This follows from interpreting toric concepts   described in \cite[Chapter 10]{MR2810322}.
 \end{proof}

\subsection{Zeta functions of diagrams}
Recall that we have a formula for our zeta functions in terms of an embedded resolution. In \cite{NemethiVeys2012}
N\'emethi and Veys define a topological zeta function in terms of the splice diagram. 
We will give here another definition which is easily seen to be equivalent and is actually just the formula as in
Theorem \ref{theorem:denefformulamotiviczetafunction}.

Let $\Gamma$ be a realizable diagram. We   define the {\it topological zeta function} of $\Gamma$ to be
\begin{eqnarray*}
 \topZeta{\Gamma}{s}  &:=&
 \sum_{v \text{ is a  node}} \frac{2-\delta_v}{ ( N_vs + \nu_v )} \\
  & & + \sum_{e= (v,w) \text{ is an edge}} 
  \frac{1}{( N_vs + \nu_v)({N_w}s + {\nu_w})} \\
 & & + \sum_{a \text{ (dotted) arrow at $v$}} 
 \frac{1}{( {N_v}s + {\nu_v})( {N_a}s + {\nu_a} )} 
 \in \Q(s).
\end{eqnarray*}
If $\Gamma$ is not realizable, let $\Gamma' $ be a  realizable refinement of $\Gamma$ and define
$ \topZeta{\Gamma}{s} :=  \topZeta{\Gamma'}{s}$. To prove that this is well-defined, we remark two things:
\begin{itemize}
\item If you add a node on a realizable diagram $\Gamma$ such that the resulting diagram is still 
realizable, the topological zeta function does not change. 
This coincides with blowing up in an intersection point of the two exceptional curves corresponding to the adjoining nodes. 
 
\item For any diagram, you can go from a realizable refinement to any other realizable refinement by blowing up points and doing 
the opposite operation.

 \end{itemize} In what follows we will only give the definition of the considered zeta functions in the case of a realizable diagram since
 these remarks will also imply that it is well-defined in  those cases. 
 \makeatletter{}\begin{figure}
 \begin{tabular}{cc}
  \multicolumn{2}{c}{ \begin{subfigure}[b]{0.5\textwidth}
\makeatletter{}\begin{tikzpicture}[scale=0.8]
	\renewcommand*{\locationOfNode}{2}
	\renewcommand*{\locationOfStart}{4}
	\renewcommand*{\heigthofedges}{1}
	\coordinate (dOne) at (-\locationOfStart,\heigthofedges);
	\coordinate (dN) at (-\locationOfStart,-\heigthofedges)  ;
	\coordinate (daOne) at (\locationOfStart,\heigthofedges)  ;
	\coordinate (daN) at (\locationOfStart,-\heigthofedges)  ;
	\coordinate (vL) at (-\locationOfNode,0) ;
	\coordinate (vR) at (\locationOfNode,0)  ;
	\draw (dOne)--(vL) node [midway, above ]  {$d_1$};
	\draw (dN)--(vL)  node [midway, above ]  {$d_n$};
	\draw[very thick] (vL)--(vR) node[very near start, above]{$d$} node[very near end, above]{$d'$};
	\draw (vR)--(daOne)  node [midway, above ]  {$d'_1$};
	\draw (vR)--(daN)  node [midway, above ]  {$d'_{n'}$};
	\draw node at ($ 1/2*(dOne) + 1/2*(dN) +(1/3,0) $) {\vdots};
	\draw node at ($ 1/2*(daOne) + 1/2*(daN) +(-1/3,0) $) {\vdots};
 	\draw node at ($(vL) + (0,-1) $){$v_L$};
 	\draw node at ($(vR) + (0,-1) $){$v_R$};
	\draw node at ($1/2*(vR) + 1/2*(vL) + (0,-1) $){$e$};
		\fill (vR) circle [radius=3pt];
	\fill (vL) circle [radius=3pt];
\end{tikzpicture} 
    \caption{The original diagram $\Gamma$}
    \label{fig:splicingexpl:Gamma}
        \end{subfigure}} \\   
        \begin{subfigure}[b]{0.3\textwidth}
 
\begin{tikzpicture}
	\renewcommand*{\locationOfNode}{1}
	\renewcommand*{\locationOfStart}{2}
	\renewcommand*{\heigthofedges}{.5}
	\coordinate (sup) at (0,.04);
	\coordinate (dOne) at (-\locationOfStart,\heigthofedges);
	\coordinate (dN) at (-\locationOfStart,-\heigthofedges)  ;
	\coordinate (daOne) at (\locationOfStart,0)  ; 
	\coordinate (vL) at (-\locationOfNode,0) ;
	\coordinate (vR) at (\locationOfNode,0)  ;
	\draw (dOne)--(vL) node [midway, above ]  {$d_1$};
	\draw (dN)--(vL)  node [midway, above ]  {$d_n$};
	\draw  (vL)--(vR) node[very near start, above]{$d$} node[very near end, above]{$d'$};
	\draw[->] ($(vR)+(sup)$)--($(daOne) + (sup)$)  node [ near start, above ]  {$D'$} node [very near end, above] {$(M)$};
	\draw[->, dotted] ($(vR)-(sup)$)--($(daOne) - (sup)$)   node [very near end,below] {$(i-1)$};
	\draw node at ($ 1/2*(dOne) + 1/2*(dN) +(1/3,0) $) {\vdots};
		\fill (vR) circle [radius=3pt];
	\fill (vL) circle [radius=3pt];
\end{tikzpicture}

                \caption{The diagram $\Gamma_L$}
                \label{fig:splicingexpl:GammaL}
        \end{subfigure}   &  
   
        \begin{subfigure}[b]{0.3\textwidth}
 
\begin{tikzpicture}
\renewcommand*{\locationOfNode}{1}
	\renewcommand*{\locationOfStart}{2}
	\renewcommand*{\heigthofedges}{.5}
	\coordinate (sup) at (0,.04);	
	\coordinate (dOne) at (-\locationOfStart,0);
	\coordinate (daOne) at (\locationOfStart,\heigthofedges)  ;
	\coordinate (daN) at (\locationOfStart,-\heigthofedges)  ;
	\coordinate (vL) at (-\locationOfNode,0) ;
	\coordinate (vR) at (\locationOfNode,0)  ;
	\draw[<-] ($(dOne)+(sup)$)--($(vL) + (sup)$)node [near end, above ]  {$D$} node [very near start, above] {$(M')$};
	\draw[<-, dotted] ($(dOne)-(sup)$)--($(vL) - (sup)$)  node [very near start, below	] {$(i'-1)$};
		 	\draw  (vL)--(vR) node[very near start, above]{$d$} node[very near end, above]{$d'$};
	\draw (vR)--(daOne)  node [midway, above ]  {$d'_1$};
	\draw (vR)--(daN)  node [midway, above ]  {$d'_{n'}$}; 
	\draw node at ($ 1/2*(daOne) + 1/2*(daN) +(-1/3,0) $) {\vdots};
		\fill (vR) circle [radius=3pt];
	\fill (vL) circle [radius=3pt];
\end{tikzpicture}

                \caption{The diagram $\Gamma_R$}
                \label{fig:splicingexpl:GammaR}
        \end{subfigure}     
        
        \end{tabular}

        \caption{Splicing a diagram $\Gamma$ along an edge $e$.} \label{fig:splicingexpl} \end{figure}
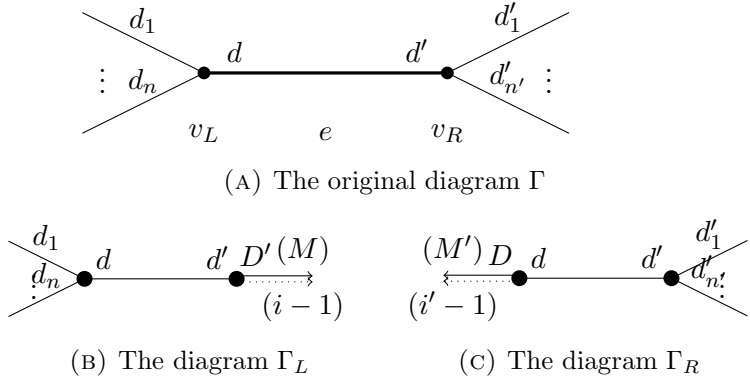 
\begin{definition}
 
Let $\Gamma$ be a realizable diagram. We   define the {\it motivic zeta function} of $\Gamma$ as
\begin{eqnarray*}
\motivicZeta{\Gamma} &:=& \sum_{v \text{ is a  node}} \frac{(\lefschetzmotive - 1) (\lefschetzmotive + 1 - \delta_v)
T^{N_v}}{\lefschetzmotive^{\nu_v} - T^{N_v}} \\
  & & + \sum_{e= (v,w) \text{ is an edge}} \frac{(\lefschetzmotive - 1)^2 T^{N_v + N_w}}{(\lefschetzmotive^{\nu_v} - T^{N_v})(\lefschetzmotive^{\nu_w} - T^{N_w})} \\
 & & + \sum_{a \text{ (dotted) arrow at $v$}} \frac{(\lefschetzmotive - 1)^2 T^{N_v + N_a}}{(\lefschetzmotive^{\nu_v} - T^{N_v})(\lefschetzmotive^{\nu_a} - T^{N_a})} 
 \in\CompletedGrothendieckRing{\C}[[T]].
\end{eqnarray*}
\end{definition}

It is possible to   define  motivic zeta functions for splice diagrams
using similar formulas  as described in \cite{MR1425327}. These formulas for the topological zeta function
are   used in \cite{NemethiVeys2012} to prove \eqref{eq:splicingformula:intro}. However 
it is unlikely that a proof of 
 Theorem \ref{theorem:splicedecompositionformulaformotiviczetafucntions} can be constructed using
 these formulas without  our notion of diagrams because of the complexity of the formulas.

A monodromic motivic zeta function is not available for our notion of diagrams. However we can define a twisted topological zeta function. Consider an $e\in \N$ and define
\begin{eqnarray*}
 \topZetaTwistedDiagram{\Gamma}{e}{s}  &:=&
 \sum_{\overset{ v \text{ is a  node},}{ e\mid N_v}} \frac{2-\delta_v}{ ( sN_v + \nu_v )} \\
  & & + \sum_{\overset{e= (v,w) \text{ is an edge},}{ e\mid N_v, e\mid N_w}} 
  \frac{1}{( sN_v + \nu_v )(s{N_w} + {\nu_w} )} \\
 & & + \sum_{\overset{a \text{ (dotted) arrow at $v$},}{ e\mid N_v, e\mid N_a}} 
 \frac{1}{(s {N_v} + {\nu_v} )(s{N_a} + {\nu_a})} 
 \in \Q(s)
\end{eqnarray*}
if $\Gamma$ is a realizable diagram.

\makeatletter{}

 \begin{figure}
  \centering
 \begin{subfigure}[b]{0.6\textwidth}
   \centering 
\begin{tikzpicture}
 \startpicturesplicingproof
 \partBpicturesplicingproof
 \partApicturesplicingproof
 \partCpicturesplicingproof
\end{tikzpicture}

    \caption{$\Gamma'$}
    \label{fig:Gamma}
        \end{subfigure}          
   \     
  
  \
  
        \begin{subfigure}[b]{0.6\textwidth}
                \centering  
  \begin{tikzpicture}
    \startpicturesplicingproof
 \partBpicturesplicingproof
 \partDpicturesplicingproof
 \partEpicturesplicingproof
  \end{tikzpicture}

                \caption{$\tilde{\Gamma}'$}
                \label{fig:GammaT}
        \end{subfigure}          
 
        \caption{
        The original diagram $\Gamma'$ and the new diagram $\tilde{\Gamma}'$,
        used in the proof of Theorem \ref{theorem:splicedecompositionformulaformotiviczetafucntions}         
        and consisting  of the same nodes and edges as the diagrams in Figure 
	  \ref{fig:diagramsoftheoremsplicedecompositionformulaformotiviczetafucntions1}. 
        \label{fig:diagramsoftheoremsplicedecompositionformulaformotiviczetafucntions2} 
        }
\end{figure}
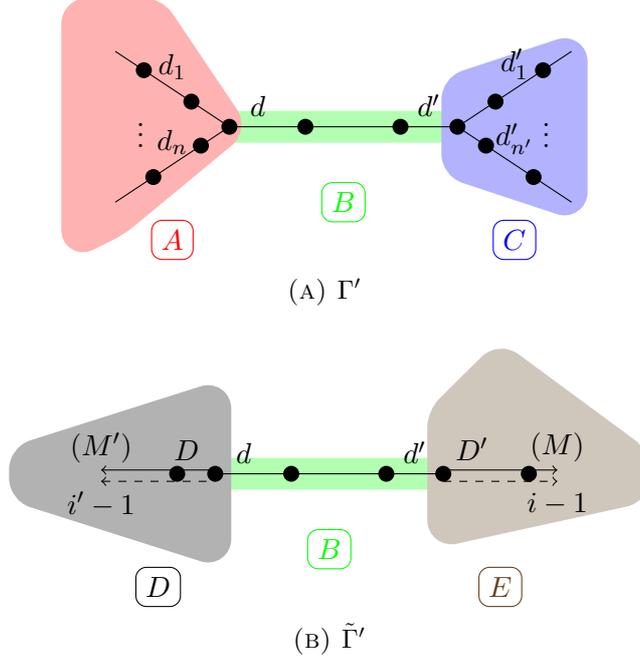
 
 \begin{figure}
  \centering

 \begin{subfigure}[b]{0.6\textwidth}
                \centering 
                           \begin{tikzpicture}
    \startpicturesplicingproof
 \partBpicturesplicingproof
 \partApicturesplicingproof
 \partEpicturesplicingproof
  \end{tikzpicture}
                \caption{$\Gamma_L'$}
                \label{fig:GammaL}
        \end{subfigure}  
        
   \     
  
  \
        
        \begin{subfigure}[b]{0.6\textwidth}
                \centering 
 \begin{tikzpicture}
 \startpicturesplicingproof
 \partBpicturesplicingproof
 \partDpicturesplicingproof
 \partCpicturesplicingproof
 \end{tikzpicture}
                \caption{$\Gamma_R'$}
                \label{fig:GammaR}
        \end{subfigure}
                \caption{
        The diagrams $\Gamma'_L$ and $\Gamma'_R$,
        used in the proof of Theorem \ref{theorem:splicedecompositionformulaformotiviczetafucntions}         
        and consisting of the same nodes and edges as the diagrams in Figure \ref{fig:diagramsoftheoremsplicedecompositionformulaformotiviczetafucntions2}.
        \label{fig:diagramsoftheoremsplicedecompositionformulaformotiviczetafucntions1} 
        }
        \end{figure}
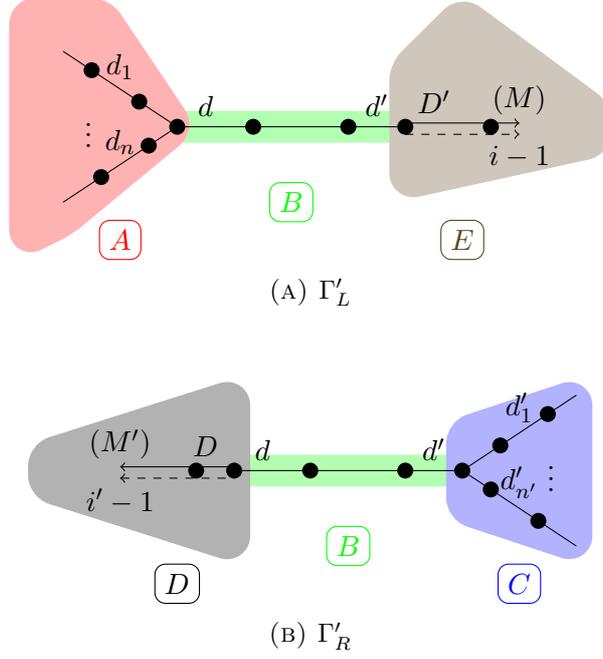

\section{Splicing formula for motivic zeta function}
 
In this section we will prove a splicing formula  for  motivic zeta functions for (splice) diagrams. This
immediately generalizes \cite[Theorem 3.2.4 (1)]{NemethiVeys2012}.

Consider a diagram $\Gamma$ with an edge $e$ between $v_L$ to $v_R$ as in Figure \ref{fig:splicingexpl:Gamma}.
Recall that $D = \prod_{i=1}^{n} d_i$ and $D' = \prod_{i=1}^{n'} d'_i$. We define the multiplicities
\begin{equation}\label{eq:splicing:multiplicityoff}
   M =  \sum_{a \text{ arrow at right side}} N_a l_{ea } \qquad \text{ and } \qquad    M' =  \sum_{a \text{ arrow at left side}} N_a l_{ea },     
\end{equation}
 and the multiplicities
\begin{equation}\label{eq:splicing:multiplicityofomega}
 i = \sum_{w \text{ node at right side}} (2-\delta_w)l_{ew} + \sum_{a \text{ dotted arrow at right side}} l_{ea}(i_a-1)
\end{equation}  and \begin{equation}\label{eq:splicing:multiplicityofomegap}
   i' = \sum_{w \text{ node at left side}} (2-\delta_w)l_{ew} + \sum_{a \text{ dotted arrow at left side}} l_{ea}(i_a-1),
\end{equation}
where $l_{ea}$ is the  product of the edge decorations adjacent to the path from $e$ to $a$ but not on it. This product
does not use the decorations on $e$ itself. 
We obtain
the diagram $\Gamma_L$ by removing all the nodes and edges at the side of $v_R$ and add
two arrows  and a node as in Figure \ref{fig:splicingexpl:GammaL}. Analogously we have $\Gamma_R$ as in Figure \ref{fig:splicingexpl:GammaR}.
This procedure is called splicing (of $\Gamma$ along $e$).

 We now state and prove our result.
\begin{theorem} \label{theorem:splicedecompositionformulaformotiviczetafucntions}
Consider a diagram  $\Gamma$  
and the splicing of   $\Gamma$ into $\Gamma_L$ and $\Gamma_R$. Then  we have
\[
\motivicZeta{\Gamma} = \motivicZeta{\Gamma_L} + \motivicZeta{\Gamma_R} - \frac{(\lefschetzmotive - 1)^2 T^{M + M'}}{ (\lefschetzmotive^i - T^M)(\lefschetzmotive^{i'} - T^{M'})}.
\]
\end{theorem}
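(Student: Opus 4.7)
The plan is to pass to realizable refinements and exploit the fact that the motivic zeta function of a diagram is by definition an additive sum of local contributions indexed by nodes, edges and (dotted) arrows. Fix a realizable refinement $\Gamma'$ of $\Gamma$. The splicing procedure extends compatibly and produces realizable refinements $\Gamma_L',\Gamma_R'$ of $\Gamma_L,\Gamma_R$ in which the new arrows added at $v_R$ in $\Gamma_L$ and at $v_L$ in $\Gamma_R$ are themselves refined into chains of valency-two nodes; these chains are the blocks E and D depicted in Figures~\ref{fig:diagramsoftheoremsplicedecompositionformulaformotiviczetafucntions1} and \ref{fig:diagramsoftheoremsplicedecompositionformulaformotiviczetafucntions2}. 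Alongside these I introduce the auxiliary realizable diagram $\tilde{\Gamma}'$ consisting only of the middle edge $e$ (with the same refinement as in $\Gamma'$) together with both tails D and E.

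The four diagrams $\Gamma',\tilde{\Gamma}',\Gamma_L',\Gamma_R'$ are each built from a subset of five disjoint blocks: A (left subtree), B ($v_L,v_R$ and the possibly refined edge $e$), C (right subtree), D, and E, where $\Gamma'$ has A,B,C; $\tilde{\Gamma}'$ has B,D,E; $\Gamma_L'$ has A,B,E; $\Gamma_R'$ has B,C,D. I expand each zeta function as a sum of local contributions and group by block. The crucial combinatorial check is that for every node $w$ the pair $(N_w,\nu_w)$ agrees across all diagrams that contain $w$. For interior nodes this is immediate, and for the boundary nodes $v_L,v_R$ (or any interior node on $e$) it reduces to the factorization $l_{v_L,x}=d\cdot l_{e,x}$ for $x$ on the left of $e$ together with its three analogues, combined with the defining relations \eqref{eq:splicing:multiplicityoff}, \eqref{eq:splicing:multiplicityofomega} and \eqref{eq:splicing:multiplicityofomegap} for $M,M',i,i'$. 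Since each block appears on exactly two of the four diagrams on each side, this produces the identity
\[
\motivicZeta{\Gamma'}+\motivicZeta{\tilde{\Gamma}'}=\motivicZeta{\Gamma_L'}+\motivicZeta{\Gamma_R'}.
\]

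The last step is to compute $\motivicZeta{\tilde{\Gamma}'}$ in closed form. By refinement invariance I may work with the minimal reduction $\tilde{\Gamma}$: two nodes $v_L,v_R$ with $\delta_{v_L}=\delta_{v_R}=1$, joined by the edge $e$, and carrying at each end one regular and one dotted arrow. Evaluating \eqref{eq:formulafornodes} and \eqref{eq:formulafordifferentialform} on $\tilde{\Gamma}$ gives $N_{v_L}=dM'+DM$, $\nu_{v_L}=Ddi'+DD'i$, and symmetrically for $v_R$. Plugging these values into the definition of $\motivicZeta{\tilde{\Gamma}}$ produces seven summands (two nodes, one edge, two regular arrows with $\nu_a=1$, two dotted arrows with $N_a=0$ and $\nu_a\in\{i,i'\}$); combining them over a common denominator and using the edge-determinant relation $dd'-DD'=1$ forced by the realizability of $\Gamma'$ should collapse the expression to
\[
\motivicZeta{\tilde{\Gamma}'}=\frac{(\lefschetzmotive-1)^2\,T^{M+M'}}{(\lefschetzmotive^i-T^M)(\lefschetzmotive^{i'}-T^{M'})}.
\]
Substituting this back into the identity above yields the stated formula.

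The principal obstacle is this last algebraic collapse. The $l$-factor bookkeeping used in the matching step is mechanical, but the seven-term sum has to telescope in a nonobvious way: the regular-arrow contributions naturally produce denominator factors $(\lefschetzmotive-T^M)$ and $(\lefschetzmotive-T^{M'})$, the dotted-arrow contributions produce $(\lefschetzmotive^i-1)$ and $(\lefschetzmotive^{i'}-1)$, whereas the target has $(\lefschetzmotive^i-T^M)$ and $(\lefschetzmotive^{i'}-T^{M'})$. Showing that all of these, together with the $(\lefschetzmotive^{\nu_{v_L}}-T^{N_{v_L}})(\lefschetzmotive^{\nu_{v_R}}-T^{N_{v_R}})$ factors coming from the node, edge and crossed-arrow contributions, combine into the right answer is where the arithmetic identity $dd'-DD'=1$ and the explicit expressions for $N_{v_L},N_{v_R},\nu_{v_L},\nu_{v_R}$ really do the work.
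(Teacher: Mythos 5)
Your first step — passing to realizable refinements, decomposing the four diagrams $\Gamma'$, $\tilde\Gamma'$, $\Gamma_L'$, $\Gamma_R'$ into the blocks $A,B,C,D,E$, and matching multiplicities via the $l_{va}$ bookkeeping to get $\motivicZeta{\Gamma'}+\motivicZeta{\tilde{\Gamma}'}=\motivicZeta{\Gamma_L'}+\motivicZeta{\Gamma_R'}$ — is exactly the paper's argument. The gap is in the second step, where you try to evaluate $\motivicZeta{\tilde\Gamma}$ by expanding the diagram formula over the minimal reduction and hoping the terms telescope. The paper does not do this: it observes that $\tilde{\Gamma}$ (more precisely its chosen refinement $\tilde{\Gamma}'$) is the dual graph of a toric log resolution of the pair $\bigl(f_0,\omega_0\bigr)=\bigl(x^My^{M'},\,x^{i-1}y^{i'-1}\,dx\wedge dy\bigr)$. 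Since $\{x^My^{M'}=0\}$ is already a normal crossings divisor, $\motivicZetaWithDifferentialForm{f_0}{\omega_0}$ is read off immediately from the trivial resolution and equals $\tfrac{(\lefschetzmotive-1)^2 T^{M+M'}}{(\lefschetzmotive^i-T^M)(\lefschetzmotive^{i'}-T^{M'})}$; independence of the resolution then gives $\motivicZeta{\tilde\Gamma}$ without any algebra. You leave the corresponding telescoping computation as an acknowledged ``principal obstacle,'' so as written the argument is incomplete.

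Moreover, your direct computation as set up would not close that gap, because the count of summands is wrong. You list seven local contributions, treating the solid arrow with label $(M')$ and the dashed arrow with label $i'-1$ at $v_L$ (and similarly at $v_R$) as two distinct strict-transform components, one with $(N_a,\nu_a)=(M',1)$ and one with $(N_a,\nu_a)=(0,i')$. In the intended interpretation these are a single component carrying both decorations — in the $(f_0,\omega_0)$ picture it is the strict transform of $\{y=0\}$, which lies in $f_0^{-1}(0)$ and in $\supp\omega_0$ simultaneously, hence $(N_a,\nu_a)=(M',i')$. The correct expansion has five terms (two nodes, one edge, two double arrows), and these do sum to the closed form; the seven-term version produces denominators $(\lefschetzmotive-T^{M'})$ and $(\lefschetzmotive^{i'}-1)$ whose sum is not equal to $\tfrac{T^{M'}}{\lefschetzmotive^{i'}-T^{M'}}$, so it gives a different (incorrect) answer. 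One further caveat: the diagram formula for $\motivicZeta{\Gamma}$ is only defined on realizable diagrams, so ``working with the minimal reduction $\tilde\Gamma$'' is not directly licensed; one must either stay on a realizable refinement or establish that the formula extends, which again is finessed by the paper's identification of $\tilde\Gamma$ with the diagram of $(f_0,\omega_0)$.
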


 \begin{proof}
 Define   the diagram $\tilde{\Gamma}$ as follows:
 
\begin{center}
\makeatletter{}\begin{tikzpicture}
	\renewcommand*{\locationOfNode}{2}
	\renewcommand*{\locationOfStart}{4}
	\coordinate (vL) at (-\locationOfNode,0) ;
	\coordinate  (vR) at (\locationOfNode,0)  ;
	  \coordinate (vLL) at (-\locationOfStart,0) ;
	\coordinate (vRR) at (\locationOfStart,0)  ;
	\coordinate (labelup) at (0,.3);
	\coordinate (labeldown) at (0,-.3);
	
	\coordinate (down) at (0,-.1);
	\coordinate (up) at (0,.05);
	
	\coordinate (vLu) at ($(vL) + (up)$);
	\coordinate (vLLu) at ($(vLL) + (up)$);
	\coordinate (vRu) at ($(vR) + (up)$);
	\coordinate (vRRu) at ($(vRR) + (up)$);

	\coordinate (vLd) at ($(vL) + (down)$);
	\coordinate (vLLd) at ($(vLL) + (down)$);
	\coordinate (vRd) at ($(vR) + (down)$);
	\coordinate (vRRd) at ($(vRR) + (down)$);
	
	\draw[<-] (vLLu) -- (vLu) node[ near end, above]{$D$};
	\draw[<-, dashed] (vLLd) -- (vLd);
	\draw (vL) -- (vR)  node[very near start, above]{$d$} node[very near end, above]{$d'$} node[midway, below]{$\tilde{e}$};
	\draw[->] (vRu) -- (vRRu) node[ near start, above]{$D'$};
	\draw[->, dashed] (vRd) -- (vRRd);
	\fill (vR) circle [radius=3pt];
	\fill (vL) circle [radius=3pt];
	
	\node at ($(vLLu) + (labelup)$) {$(M')$};
	\node at ($(vRRu) + (labelup)$) {$(M)$};
	
	\node at ($(vLLd) + (labeldown)$) {$i'-1$};
	\node at ($(vRRd) + (labeldown)$) {$i-1$};
\end{tikzpicture} 
\end{center}
 where   $\tilde{e}$ is the edge between the nodes.
Remark that this is the dual graph of a (toric) resolution of the polynomial $f = x^My^{M'}$ and the differential form
$\omega = x^{i-1}y^{i'-1} {dx} \wedge {dy}$. Hence $\motivicZeta{\tilde{\Gamma}} =  \frac{(\lefschetzmotive - 1)^2 T^{M + M'}}{ (\lefschetzmotive^i - T^M)(\lefschetzmotive^{i'} - T^{M'})}$ 
and thus we need to show that
\begin{equation}\label{eq:relbetweenmotivc}
\motivicZeta{\Gamma} + \motivicZeta{\tilde{\Gamma}} = \motivicZeta{\Gamma_L} + \motivicZeta{\Gamma_R}.
\end{equation}
We prove this by considering suitable realizable refinements of these diagrams. 
First, take a realizable refinement $\Gamma'$ of $\Gamma$. This is drawn in Figure \ref{fig:Gamma},
where you have the division into parts \letterA, {\letterB} and \letterC. Remark that the edges crossing the border belong to \letterB. 

Second, take a realizable refinement $\tilde{\Gamma}'$ of $\tilde{\Gamma}$ such that the edge $e$ of $\Gamma$ has the same refinement 
 as the edge $\tilde{e}$ of $\tilde{\Gamma}$. Hence we can consider
Figure \ref{fig:GammaT} where we have a division into \letterD, {\letterB} and {\letterE} where the subdiagram {\letterB} is the same as in Figure \ref{fig:Gamma}.

We glue these refinements  together to obtain refinements $\Gamma_L'$ and $\Gamma_R'$ of
$\Gamma_L$ and $\Gamma_R$. This is shown in Figures \ref{fig:GammaL} and \ref{fig:GammaR}.

We compare the diagrams $\Gamma'$ and $\tilde{\Gamma}'$ to the  
diagrams $\Gamma_R'$ and $\Gamma_L'$. Both groups contain 
  the subdiagrams \letterA, \letterB, \letterB, \letterC, {\letterD} and  \letterE.
  Hence if we calculate the motivic zeta function, where we take the sum over all nodes and edges in these subdiagrams, 
  we have
  proven (\ref{eq:relbetweenmotivc}) 
  if we show that the corresponding nodes have the same multiplicities.
But this is easily seen by using (\ref{eq:formulafornodes}), (\ref{eq:formulafordifferentialform}), (\ref{eq:splicing:multiplicityoff}), (\ref{eq:splicing:multiplicityofomega}) and
(\ref{eq:splicing:multiplicityofomegap}).	
 \end{proof}

 We can connect our proof to the splicing of links. Recall that splicing consists of taking two links, where in each
 a knot is selected, removing a tubular neighborhood around these knots and glueing the remainders together. However if
 you glue  these removed tubular neighbourhoods together, you find the link of $\Gamma'$.

\makeatletter{}\section{Algebraic dependence of the monodromic motivic zeta function}
Following Theorem \ref{theorem:splicedecompositionformulaformotiviczetafucntions}, we want to define
a monodromic zeta function for a diagram. But it turns out that this is not possible. 
Consider for this the family of  polynomials
\[
  f_\lambda = xy^2(x-y)(x-\lambda y) \in \C[x,y],
\] where $\lambda \in \C\setminus \{0,1\}$. The splice diagram associated to this family of 
polynomials is independent of $\lambda$. But we do have the following result.
\begin{proposition} \label{proposition:algebraicdependenceofmonodromiczetafunctions}
 There exist  $\lambda, \lambda' \in \C\setminus\{0,1\}$ such that
 \[
  \monodromicmotivicZetaFunction{f_\lambda} \neq \monodromicmotivicZetaFunction{f_{\lambda'}}.
 \]

\end{proposition}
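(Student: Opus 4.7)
The plan is to isolate from $\monodromicmotivicZetaFunction{f_\lambda}$ the class of a single $\mu_5$-equivariant curve that encodes nontrivial $\lambda$-dependence, and then to detect this dependence by passing to Jacobians via the Picard morphism of \cite{Ekedahl2009} recalled in the appendix.

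First I would analyse the minimal embedded resolution: one blow-up of the origin separates the four lines and produces an exceptional divisor $E \cong \projectivenspace{1}{\C}$ with $N_E = 5$ and $\nu_E = 2$, meeting four strict transforms with $(N,\nu) \in \{(1,1), (2,1), (1,1), (1,1)\}$ transversally. In Theorem \ref{theorem:denefformulamonodromicmotiviczetafunction} the only $I \subseteq J$ with $m_I > 1$ and $\EIO \cap \pi^{-1}(0) \neq \emptyset$ is $I = \{E\}$ (with $m_I = 5$), for which $\EIOT_\lambda$ is a $\mu_5$-Galois cover of $E \setminus \{\text{4 points}\}$; every other contributing $I$ is of the form $\{E, L\}$ and affects only coefficients of $T^n$ with $n \geq 6$. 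Expanding the $I = \{E\}$ contribution as a geometric series, the coefficient of $T^5$ in $\monodromicmotivicZetaFunction{f_\lambda}$ equals $\lefschetzmotive^{-2}\classof{\EIOT_\lambda}$ inside $\monodromicLocalizedGrothendieckring{\C}$, so it suffices to produce $\lambda \neq \lambda'$ with $\classof{\EIOT_\lambda} \neq \classof{\EIOT_{\lambda'}}$.

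Next I would compactify. Étale-locally the cover is given by $z^5 = u^{-1}$ with $u$ vanishing to orders $(1,2,1,1)$ at the four removed points, all coprime to $5$, so it extends to a smooth projective $\mu_5$-Galois cover $\bar{C}_\lambda \to \projectivenspace{1}{\C}$ that is totally ramified of index $5$ at each branch point. Riemann-Hurwitz gives $g(\bar{C}_\lambda) = 4$, and since each of the four ramification points is $\mu_5$-fixed we have $\classof{\EIOT_\lambda} = \classof{\bar{C}_\lambda} - 4$ in $\monodromicGrothendieckRing{\C}$; thus it suffices to separate $\classof{\bar{C}_\lambda}$ and $\classof{\bar{C}_{\lambda'}}$.

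Now I would apply the $\muhat$-equivariant Picard morphism from the appendix: it sends $\classof{\bar{C}_\lambda}$ to the isogeny class of $\mathrm{Jac}(\bar{C}_\lambda)$ viewed as a $\mu_5$-equivariant abelian variety, so it is enough to find $\lambda \neq \lambda'$ for which these two Jacobians are not $\mu_5$-equivariantly isogenous. The family $\{\bar{C}_\lambda\}_{\lambda \in \C \setminus \{0,1\}}$ is non-isotrivial because the cross-ratio of the four marked branch points on $E$ is a nonconstant function of $\lambda$ (with only finite ambiguity from relabelling the three type-$(1,1)$ points); hence the associated period map to the moduli space of $\mu_5$-equivariant principally polarised abelian $4$-folds has positive-dimensional image. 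Any single isogeny class meets this image in a countable Hecke-type subset of $\C \setminus \{0,1\}$, so uncountably many isogeny classes are realised and the desired pair $(\lambda, \lambda')$ exists. The main obstacle is this last moduli-theoretic step: one must verify that the appendix's $\muhat$-equivariant Picard morphism is indeed defined on $\monodromicLocalizedGrothendieckring{\C}$, and that equipping the Jacobians with the $\mu_5$-action does not collapse the non-isotriviality of the underlying family (forgetting the action is essentially already sufficient). The first three steps are routine combinatorial identifications using Theorem \ref{theorem:denefformulamonodromicmotiviczetafunction}, Riemann-Hurwitz, and the relations defining $\monodromicGrothendieckRing{\C}$.
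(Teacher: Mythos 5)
Your proposal follows the paper's strategy for the first several steps: blow up the origin once, identify (via Theorem \ref{theorem:denefformulamonodromicmotiviczetafunction}) the low-degree coefficient as $\lefschetzmotive^{-2}\classof{\tilde E_\lambda^\circ}$ where $\tilde E_\lambda^\circ$ is the degree-$5$ cyclic cover of $\projectivenspace{1}{\C}$ minus the four branch points, compactify to a genus-$4$ curve $\bar{C}_\lambda$, and push through the Picard morphism of the appendix to reduce to showing that the Jacobians $\mathrm{Jac}(\bar{C}_\lambda)$ realise more than one isogeny class. The final step, however, is genuinely different. The paper cites de Jong--Noot \cite{dejongNoot}, whose Proposition 2.7 shows (via complex multiplication) that this family of Jacobians meets infinitely many isogeny classes, so the set $\{\classof{\tilde E_\lambda^\circ}\}$ is infinite. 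You instead argue by moduli: the period map from $\C\setminus\{0,1\}$ to $\mathcal{A}_4$ is nonconstant (because the cross-ratio of branch points moves and Torelli applies), so its image is uncountable, while any single isogeny class is only a countable subset of $\mathcal{A}_4(\C)$. Both conclude $\lambda \neq \lambda'$ with non-isogenous Jacobians exist. Your route avoids the CM machinery and is more elementary in spirit, but it rests on two facts you assert rather than prove: that the family of curves $\bar{C}_\lambda$ is non-isotrivial (this needs more than noting the branch locus moves — you must know the branch configuration is recovered, up to finite ambiguity, from the abstract curve), and that the isogeny class of a ppav is a countable subset of $\mathcal{A}_g(\C)$. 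Both are standard and correct, but the paper's citation is more economical. Your side remarks on a putative $\mu_5$-equivariant Picard morphism are unnecessary, and you correctly note that forgetting the $\muhat$-action already suffices — indeed the paper passes to $\CompletedGrothendieckRing{\C}$ immediately and applies the non-equivariant $\widetilde{\Pic}$.
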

This results shows that a monodromic motivic zeta function cannot be defined for a diagram
since the diagrams for this  family of polynomials are the same.

\begin{itemize}
 \item 
 Define the Grothendieck group of abelian varieties $\Grothendieckgroupofabelianvarieties{\C}$ as the 
 abelian group generated by isomorphism classes of abelian varieties with the relations 
 $\classof{A \sumofabelian B } = \classof{ A} + \classof{B} $ and
\item define $\Grothendieckgroupofabelianvarietiesuptoisogeny{\C}$ as the abelian group generated by
isogeny classes of abelian varieties and relations $\classof{A\sumofabelian B} = \classof{A} + \classof{B} $.
\end{itemize}
The structure of $\Grothendieckgroupofabelianvarietiesuptoisogeny{\C}$ is easier since Poincar\'e's complete 
irreducibility theorem 
\cite[p. 173]{mumford1974abelian}   implies that $\Grothendieckgroupofabelianvarietiesuptoisogeny{\C}$
 is isomorphic to the free abelian group on simple abelian varieties. Hence equality of the classes 
 of two abelian varieties in 
 $\Grothendieckgroupofabelianvarietiesuptoisogeny{\C}$ implies that they are isogenous. 
 
 In the appendix we describe a group morphism $\widetilde{\Pic} :  \CompletedGrothendieckRing{\C} \to \Grothendieckgroupofabelianvarieties{\C} $.
 This morphism sends the class of a smooth complete variety to the class of its Jacobian. We will use this morphism in the 
 following proof, where we compose it with the forgetful morphism 
 $\Grothendieckgroupofabelianvarieties{\C} \to \Grothendieckgroupofabelianvarietiesuptoisogeny{\C}$.

\begin{proof}[Proof of Proposition \ref{proposition:algebraicdependenceofmonodromiczetafunctions}] Let 
$\pi : X \to \affinenspace{2}{\C}$ be
the blowup at the origin.
 This is an embedded resolution of singularities for $f_\lambda$. The formula of Denef-Loeser then tells us that
 we need to prove that the class of 
 \[
  \tilde{ E}^\circ_\lambda = \{(x,y) \in \C^2 \mid y^5 = x(x-1)(x-\lambda), y \neq 0\}
 \] does depend on $\lambda$. 
 
 We will now prove that if two varieties are equal in $\CompletedGrothendieckRing{\C}$, then their jacobians are isogenous. 
 Combining
 this with \cite[Proposition 2.7]{dejongNoot} we find that the set 
 $\{\classof{\tilde{ E}^\circ_\lambda} \in \CompletedGrothendieckRing{\C} \mid \lambda \in \C \setminus \{0,1\} \}$ is infinite. 

We have a map 
 $\CompletedGrothendieckRing{\C} \stackrel{\tilde\Pic}{\to}  \Grothendieckgroupofabelianvarieties{\C} \to \Grothendieckgroupofabelianvarietiesuptoisogeny{\C}$ 
 which sends a complete smooth variety to the class of its jacobian. Using the remark before the start of the proof, we find that 
 equality in $\LocalizedGrothendieckring{\C}$ implies that their jacobians are isogenous.
\end{proof}

This has implications on how to formulate a splicing formula for the monodromic motivic zeta functions. The same proof of 
Theorem \ref{theorem:splicedecompositionformulaformotiviczetafucntions}
will work if you work with a more general notion of a diagram. In this generalization you need to encode the information
of $\EIOT$ more carefully, for example by remembering the locations of the branch  points of the cover 
$\EIOT \to \projectivenspace{1}{\C}$.

Another way is to formulate the splicing formula in terms of $f$ and $\omega$.
\begin{theorem}
Let $f\in \C[x,y]$ and let $\omega$  be a differential form. Also fix an edge $e$ in  an associated diagram $\Gamma$ for some embedded resolution and
denote by $\Gamma_L$ and $\Gamma_R$ the resulting diagrams after splicing along $e$.
Then there exists $f_L, f_R \in \C[x,y]$ and differential forms $\omega_L, \omega_R$, whose splice diagrams are $\Gamma_L$ and $\Gamma_R$
for some embedded resolutions, such that
\[
 \monodromicmotivicZetaFunction{f, \omega} = \monodromicmotivicZetaFunction{f_1, \omega_1} + \monodromicmotivicZetaFunction{f_2, \omega_2} -  
 \frac{(\lefschetzmotive - 1)^2 T^{M + M'}}{ (\lefschetzmotive^i - T^M)(\lefschetzmotive^{i'} - T^{M'})}.
\]  

\end{theorem}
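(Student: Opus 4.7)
The approach parallels the proof of Theorem \ref{theorem:splicedecompositionformulaformotiviczetafucntions}, but since Proposition \ref{proposition:algebraicdependenceofmonodromiczetafunctions} shows that the monodromic motivic zeta function is not determined by the splice diagram alone, we cannot argue purely diagrammatically and must instead construct explicit polynomials $f_L, f_R$ and forms $\omega_L, \omega_R$. The plan is to build these so that the Denef--Loeser expansion from Theorem \ref{theorem:denefformulamonodromicmotiviczetafunction} decomposes term-by-term exactly as in the proof of Theorem \ref{theorem:splicedecompositionformulaformotiviczetafucntions}.

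For the construction, I factor $f = f^L\cdot f^R$ into the product of the irreducible factors whose strict transforms are attached as arrows on the left, respectively right, side of $e$ in $\Gamma$, and split $\supp(\omega)$ analogously. Pick a generic polynomial $g\in \C[x,y]$ whose strict transform, after a sequence of blowups extending $\pi$, is transverse to the exceptional curve realizing the arrow with multiplicity $M$ inserted on the right end of the edge in $\Gamma_L$; similarly pick a generic $h$ realizing the dotted arrow of multiplicity $i-1$. Then set $f_L := f^L\cdot g^M$ and define $\omega_L$ by adjoining $h^{i-1}$ to the differential form supporting the left-hand dotted arrows of $\omega$; define $f_R,\omega_R$ symmetrically using polynomials $g',h'$ realizing the added arrows on $\Gamma_R$ with multiplicities $M'$ and $i'-1$. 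A local toric computation near $v_R$ (respectively $v_L$) verifies that these choices give $\Gamma_L$ (respectively $\Gamma_R$) as the associated splice diagram for an appropriate embedded resolution, by the same multiplicity formulas \eqref{eq:splicing:multiplicityoff}--\eqref{eq:splicing:multiplicityofomegap} used before.

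To conclude I take compatible realizable refinements $\Gamma', \tilde\Gamma', \Gamma'_L, \Gamma'_R$ of the four diagrams involved, chosen so that they decompose into the common subdiagrams \letterA, \letterB, \letterC, \letterD, \letterE\ exactly as in Figures \ref{fig:diagramsoftheoremsplicedecompositionformulaformotiviczetafucntions1} and \ref{fig:diagramsoftheoremsplicedecompositionformulaformotiviczetafucntions2}, and here $\tilde\Gamma$ is the toric diagram for $(x^My^{M'},\ x^{i-1}y^{i'-1}dx\wedge dy)$ whose monodromic motivic zeta function contributes the correction term on the right-hand side. Expanding each monodromic zeta function via Theorem \ref{theorem:denefformulamonodromicmotiviczetafunction} as a sum over subsets $I$ of divisors, grouping contributions by which subdiagram the indices $I$ lie in, and invoking the diagram arithmetic from the proof of Theorem \ref{theorem:splicedecompositionformulaformotiviczetafucntions} reduces the identity to showing that the classes $\classof{\EIOT\cap \pi^{-1}(0)}$ agree pairwise between corresponding subdiagrams of $(\Gamma,\tilde\Gamma)$ and $(\Gamma_L,\Gamma_R)$.

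The main obstacle is precisely this last step: for $I$ consisting of left-side exceptional components (subdiagram \letterA), I must show that the cyclic cover attached to $f$ is isomorphic, as a $\muhat$-variety, to the one attached to $f_L$. Locally near such an $\EI$, this amounts to checking that the ratio $f_L/f = g^M / f^R$ is an $m_I$-th power of a unit, which follows from the splice-diagram arithmetic because both $M$ and the multiplicity of $f^R$ along each left-side exceptional curve are divisible by $m_I$ (as they involve the edge decorations adjacent to the path from $e$ through $\EI$). The analogous verification for subdiagrams \letterB, \letterC, \letterD, \letterE\ is either symmetric or follows directly from the fact that $f$ and $f_L$ share the same local factor $f^L$ (respectively that $\tilde\Gamma$ is deliberately modeled on the monomial case where the cover is explicit), which is exactly what makes the correction term $\frac{(\lefschetzmotive-1)^2 T^{M+M'}}{(\lefschetzmotive^i-T^M)(\lefschetzmotive^{i'}-T^{M'})}$ appear with the claimed $\muhat$-action.
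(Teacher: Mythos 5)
The paper states this theorem without providing a proof: it appears at the end of Section 4, preceded only by a one-sentence hint that one should "formulate the splicing formula in terms of $f$ and $\omega$" and the remark that the proof of Theorem \ref{theorem:splicedecompositionformulaformotiviczetafucntions} would carry over if one tracks the branch points of the covers $\EIOT$. Your overall strategy --- explicitly constructing $f_L,f_R,\omega_L,\omega_R$, using compatible realizable refinements, decomposing into the shared subdiagrams \letterA, \letterB, \letterC, \letterD, \letterE, and reducing to matching the classes $\classof{\EIOT\cap\pi^{-1}(0)}$ --- is faithful to that hint and is the natural implementation.

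That said, the justification of the crucial step is off. You assert that $g^M/f^R$ is an $m_I$-th power of a unit near $\EI$ "because both $M$ and the multiplicity of $f^R$ along each left-side exceptional curve are divisible by $m_I$." This is not quite the right condition, nor is it clear that it holds. What one actually needs is that the restriction of $g^M/f^R$ to $\EIO$ lies in $(\mathcal{O}^*(\EIO))^{m_I}$; since $\EIO$ is $\projectivenspace{1}{\C}$ minus finitely many points, over $\C$ this is equivalent to asking that the \emph{orders of $g^M/f^R$ at the punctures of $\EIO$} be divisible by $m_I$. But the much stronger and cleaner fact is available: for $I$ strictly inside \letterA\ (or \letterB), the requirement that the multiplicities of $f_L$ and $f$ agree along every left-side component (which is exactly what makes $\Gamma'$ and $\Gamma_L'$ share the subdiagram \letterA) forces those orders to be \emph{zero}, so $g^M/f^R\vert_{\EIO}$ extends to a constant on $\projectivenspace{1}{\C}$ and the covers agree without any arithmetic. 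Your divisibility claim about $M$ and the multiplicities of $f^R$ along $E_i$ concerns the wrong quantities (multiplicities along $E_i$ itself, not the orders of the ratio at the branch points of $\EIO$), and I do not see why it would hold in general.

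A second point that deserves explicit treatment: for the boundary nodes $v_L$ and $v_R$ (and subsets $I$ reaching into two subdiagrams), the varieties $\EIO$ for $\Gamma'$ and $\Gamma_L'$ are genuinely different (for instance $E_{v_R}^\circ$ has $n'+1$ punctures in $\Gamma'$ but only $3$ in $\Gamma_L'$), so the pairing is not $\Gamma'\leftrightarrow\Gamma_L'$ there; it is $\Gamma'\leftrightarrow\Gamma_R'$ and $\tilde\Gamma'\leftrightarrow\Gamma_L'$. You say the remaining cases are "either symmetric or follow directly," but this is precisely the place where Proposition \ref{proposition:algebraicdependenceofmonodromiczetafunctions} warns that the cover depends on the actual geometry and not just the combinatorics, so it should be spelled out that $f$ and $f_R$ (respectively $\tilde\Gamma$'s monomial and $f_L$) agree near $E_{v_R}$ in the sense that their ratio restricts to a constant on $E_{v_R}^\circ$. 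Finally, the construction $f_L := f^L\cdot g^M$ with a \emph{generic} $g$ will not immediately produce the decoration $D'$ on the new arrow of $\Gamma_L$; realizing $\Gamma_L$ as a splice diagram of $(f_L,\omega_L)$ requires a more careful (and not merely "generic") choice of $g$, or at least a verification that suitable blowups produce the correct edge decorations. These gaps are fixable, but as written the proof does not close them.
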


\makeatletter{}\section{Monodromy conjecture\label{section:mc}}
In this section we discuss how we can lift  the results in \cite{NemethiVeys2012} about the generalized monodromy conjecture 
to the motivic level.
Recall that we consider our  polynomial $f$ as a germ $(\affinenspace{2}{\C},0) \to (\affinenspace{1}{\C},0)$. Let  $F_0$
be the Milnor fiber of this germ, $h_i : H_i(F_0, \C) \to H_i(F_0, \C)$ the algebraic monodromy ($i=0,1$), 
$\Delta_i = \det(tI - h_i)$
the characteristic polynomial of $h_i$ and $\zeta(t) = \Delta_1\slash \Delta_0$.

Given a diagram $\Gamma$, we define the formula
\[
 \zeta_\Gamma(t) = \prod_{ {v \text{ is a  node}}}  \left( t^{N_v} - 1   \right)^{\delta'_v}
\]
where $\delta'_v$ is the valency of $v$ considered in the graph including the arrows corresponding to strict transforms of $f$, but  without the dotted arrows corresponding to the strict transforms of $\supp \omega$. 
This recovers
the monodromy zeta function of $f$. Remark that $\Delta_0 = t^d-1$
where $d = \gcd_{a \text{ is an arrow}} N_a $, and thus we can discuss monodromy eigenvalues of a diagram. 

In what follows it will be implicit
that monodromy eigenvalues are the monodromy eigenvalues of our fixed $f$.

\subsection{Motivic zeta function}

N\'emethi and Veys defined a differential form $\omega$ on a diagram $\Gamma $ to be allowed if the 
following conditions are satisfied:
\begin{itemize}
 \item $(N_a,i_a) \neq (0,0)$  for all (dotted) arrows.
 \item each star-shaped subdiagram with the induced decorations obtained after repeated splicing needs to be 
 allowed.
 \item if $\Gamma$ is a star-shaped diagram the following condition needs to be satisfied:
 
 \begin{flushright}
\begin{minipage}{0.9\textwidth}
 Let the central node be connected to $n$ boundary vertices whose supporting edges have decorations $\{d_l\}_{l=1}^n$,
and with $r$ other incident edges connecting with arrowheads. Then the decorations $i_1-1, \ldots, i_n-1$
of the dashed arrows at these boundary vertices are subject to the following restrictions:
\begin{center}\begin{minipage}{0.8\textwidth}
 If $d_l\vert i_l$ for at least $n+r-2$ indices $l\in\{1,\ldots, n\}$, then $i_l = d_l$
 for at least $n+r-2$ of the indices $l$.
\end{minipage}
\end{center}
\end{minipage}
 \end{flushright}
 \vskip 5mm
 
 \begin{center}
  \makeatletter{}

\begin{tikzpicture}
	\newcommand*{\distu}{0.8}
	\newcommand*{\distr}{3}
	\coordinate (RU) at (\distr,\distu) ;
	\coordinate (RD) at (\distr,-\distu)  ;
	\coordinate (LU) at (-\distr,\distu);
	\coordinate (LD) at (-\distr,-\distu);
	\coordinate (MID) at (0,0);
	\draw[thick] (MID)--(RU) node[near start, above ] {$d_1$};
	\draw[thick] (MID)--(RD) node[near start, below ] {$d_n$};
	\draw[->, thick] (MID)--(LU);
	\draw[->, thick] (MID)--(LD);
	\draw[->, thick, dotted] (RD)--($(RD) + (1,0)$);	\draw node at ($(RD) + (1.7,0)$) {$i_1-1$} ;
	\draw node at ($(RU) + (1.7,0)$) {$i_n-1$} ;
	\draw[->, thick, dotted] (RU)--($(RU) + (1,0)$);	\draw node at (1.5,0.1){$\vdots$} ;  
	\draw node at (-1.5,0.1){$\vdots$};  
	\draw node at (-\distr, -2) {$r$ arrowheads };
	\draw node at (-\distr, -2.5) {which might be doublearrows};
	\draw node at (\distr, -2) {$n$ boundary vertices};
 	\fill (MID) circle [radius=3pt];
 	\fill (RD) circle [radius=3pt];
 	\fill (RU) circle [radius=3pt];
\end{tikzpicture}

 \end{center}

\end{itemize}
N\'emethi and Veys showed that this set of allowed forms has the following properties:
 \begin{itemize}
  \item for every allowed form $\omega$ and every pole $s_0$ of $\topZeta{f,\omega}{s}$, $\exp(2\pi i s_0)$
  is a monodromy eigenvalue. 
  \item $dx\wedge dy$ is allowed;
  \item for every   monodromy eigenvalue $\lambda$, there is
  an allowed form $\omega $ such that  $\topZeta{f,\omega}{s}$ has
  a pole $s_0$ and $\lambda=\exp(2\pi i s_0)$. 
 \end{itemize}
To state the generalized monodromy conjecture for   the motivic zeta function we need the notion of a pole. This has 
been done by Rodrigues
and Veys in \cite{MR1978573}. We will use a more direct approach.
\begin{definition}
 We call $s \in \Q$ a pole of  $\motivicZetaWithDifferentialForm{f}{\omega}$ if there exists no set 
 $U \subseteq \Z_{\geq 0}\times \Z_{\geq 0} $ such that
 \[
  \motivicZetaWithDifferentialForm{f}{\omega} \in 
 \CompletedGrothendieckRing{\C}\left[\frac{T^N}{\lefschetzmotive^\nu - T^N}\right]_{(v,N) \in U} \subseteq  \CompletedGrothendieckRing{\C}[[T]].
 \]
and such that $s\neq -\frac{\nu}{N}$ for all $(\nu,N) \in U$.
\end{definition}
 
Because $\CompletedGrothendieckRing{\C}$ is not a domain \cite{Ekedahl2009}, we are careful in formulating the next theorem. 
\begin{theorem}\label{thm:generalizedmotivicmonodromyconjecture}
 Let $f \in \C[x,y]$ and $\omega$ an allowed form.  
Then 
\[
 \motivicZetaWithDifferentialForm{f}{\omega} \in 
 \CompletedGrothendieckRing{\C}\left[\frac{T^N}{\lefschetzmotive^\nu - T^N}\right]_{(v,N) \in S} \subseteq  \CompletedGrothendieckRing{\C}[[T]].
\]
where $S = \{(\nu,N) \in \Z_{\geq 0}\times \Z_{\geq 0} \mid (\nu,N) \neq (0,0), \exp( - 2\pi i\frac{\nu}{N}) \text{ is a monodromy eigenvalue of }f\}$.
\end{theorem}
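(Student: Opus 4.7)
My plan is to lift the proof of the topological generalized monodromy conjecture from \cite{NemethiVeys2012} to the motivic level, using Theorem \ref{theorem:splicedecompositionformulaformotiviczetafucntions} as the main inductive device. I proceed by induction on the number of nodes in a minimally reduced diagram $\Gamma$ associated to $(f,\omega)$.

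For the inductive step, pick an edge $e$ between two nodes of $\Gamma$ and apply the splicing formula to obtain
\[
\motivicZeta{\Gamma} \;=\; \motivicZeta{\Gamma_L} + \motivicZeta{\Gamma_R} - \frac{(\lefschetzmotive - 1)^2 T^{M + M'}}{(\lefschetzmotive^i - T^M)(\lefschetzmotive^{i'} - T^{M'})}.
\]
Since the allowed-form condition is defined recursively through splicing, both $\Gamma_L$ and $\Gamma_R$ carry allowed data, and by the inductive hypothesis the first two summands already lie in the subring generated by the allowed fractions. For the correction term I would verify that $\exp(-2\pi i\, i/M)$ and $\exp(-2\pi i\, i'/M')$ occur as monodromy eigenvalues of $f$, by combining A'Campo's formula $\zeta_\Gamma(t) = \prod_v (t^{N_v}-1)^{\delta'_v}$ with the arithmetic identities for $(M,i)$ and $(M',i')$ given in \eqref{eq:splicing:multiplicityoff}--\eqref{eq:splicing:multiplicityofomegap}; this is the motivic counterpart of what is carried out at the level of topological poles in \cite{NemethiVeys2012}.

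The base case is a star-shaped diagram with a single central node $v_0$, boundary vertices $v_1,\dots,v_n$, and some arrowheads. I would expand $\motivicZetaWithDifferentialForm{f}{\omega}$ explicitly via Theorem \ref{theorem:denefformulamotiviczetafunction}: the candidate denominators are $\lefschetzmotive^{\nu_{v_0}} - T^{N_{v_0}}$, the $\lefschetzmotive^{\nu_l} - T^{N_l}$ for each boundary vertex, and those coming from the arrows. The last clause of the allowed-form definition is designed exactly to force the arithmetic coincidences needed for the \emph{bad} pairs $(\nu_l, N_l)$ to cancel; partial-fraction manipulations of the form $\frac{1}{(\lefschetzmotive^a-T^b)(\lefschetzmotive^c-T^d)}$ inside $\CompletedGrothendieckRing{\C}\bigl[\tfrac{T^N}{\lefschetzmotive^\nu-T^N}\bigr]$ then absorb these factors into the central-node denominator or eliminate them outright.

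The main obstacle I anticipate is precisely this base case: at the topological level one only needs to kill the residue at a bad candidate pole, which is an Euler-characteristic identity, whereas here the identity must hold in $\CompletedGrothendieckRing{\C}$ itself\,---\,a ring that is not a domain by \cite{Ekedahl2009}. Fortunately, in the star-shaped situation the classes $\classof{\EIO \cap \pi^{-1}(0)}$ reduce to explicit polynomial expressions in $\lefschetzmotive$, so the required cancellations become formal identities in $\Z[\lefschetzmotive][[T]]\bigl[\tfrac{T^N}{\lefschetzmotive^\nu-T^N}\bigr]$ that can be verified directly, once the topological residue cancellations of \cite{NemethiVeys2012} have been established.
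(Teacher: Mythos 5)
Your overall strategy---induction via Theorem \ref{theorem:splicedecompositionformulaformotiviczetafucntions} with star-shaped diagrams as the base case, together with the observation that for plane curves the classes $\classof{\EIO\cap\pi^{-1}(0)}$ are polynomials in $\lefschetzmotive$ so that the non-domain issue can be sidestepped---is the same route the paper takes, which is itself a deferral to the topological argument of \cite{NemethiVeys2012}. The paper phrases the star-shaped step as showing $\alpha_i = -N_i\tfrac{\nu}{N}+\nu_i = 1$ for sufficiently many $i$, which is exactly your ``arithmetic coincidences'' in slightly different language.

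There is, however, a step you have stated too quickly. When you say the inductive hypothesis places $\motivicZeta{\Gamma_L}$ and $\motivicZeta{\Gamma_R}$ ``in the subring generated by the allowed fractions,'' you must specify: allowed for which polynomial? The spliced diagrams $\Gamma_L$ and $\Gamma_R$ carry a new $f$-arrow with multiplicity $M$ (resp.\ $M'$), so their monodromy zeta functions $\prod_v(t^{N_v}-1)^{\delta'_v}$, and hence their eigenvalue sets, are not literally those of $f$; the inductive hypothesis a priori places each summand in a ring associated to the wrong set $S$. Likewise, your assertion that $\exp(-2\pi i\, i/M)$ and $\exp(-2\pi i\, i'/M')$ are monodromy eigenvalues of $f$ is not automatic from \eqref{eq:splicing:multiplicityoff}--\eqref{eq:splicing:multiplicityofomegap}; the more typical phenomenon in the topological argument is that the correction term \emph{cancels} against bad contributions from $\motivicZeta{\Gamma_L}+\motivicZeta{\Gamma_R}$ rather than being independently allowed. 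Relating the eigenvalues of the spliced diagrams to those of $\Gamma$, and showing that the discrepancies cancel in the combination, is precisely the bookkeeping of \cite{NemethiVeys2012} that your outline takes for granted; it must be carried through the motivic lift explicitly rather than asserted.
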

\begin{proof}
 The proof goes as in the case of the topological zeta function, where you need  first to consider 
 star-shaped realizable diagrams. In this case you need to prove that $\alpha_i= -N_i\frac{\nu}{N} + \nu_i = 1$  for 
 sufficiently many $i$.
 
 The general case can be proved in the same way as  in the proof for the topological zeta function.
\end{proof}

By specializing to the topological zeta function, we easily find that every  monodromy eigenvalue is obtained as a pole.
Hence we have proven the generalized monodromy conjecture  for the motivic zeta function.
\begin{corollary}\label{cor:generalizedmonodromyconjectureformotiviczetafunctions}
Consider the set of allowed forms for a diagram $\Gamma$ of $f \in \C[x,y]$. It
satisfies the following conditions:
 \begin{itemize}
  \item for every allowed form $\omega$, every pole of $ \motivicZetaWithDifferentialForm{f}{\omega}$ induces 
  a monodromy eigenvalue of $f$. More specifically
  Theorem \ref{thm:generalizedmotivicmonodromyconjecture} holds.
  \item $dx\wedge dy$ is allowed;
  \item every monodromy eigenvalue is obtained as a pole of the motivic zeta function of $f$ with respect to $\omega$.
 \end{itemize}

\end{corollary}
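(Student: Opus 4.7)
The plan is to verify the three bullet points separately. The first bullet point is precisely the content of Theorem \ref{thm:generalizedmotivicmonodromyconjecture}, so nothing further is required there. The second bullet point, that $dx \wedge dy$ is allowed, is a direct combinatorial verification against Némethi--Veys' definition of ``allowed'' recalled above; this check is independent of whether one works motivically or topologically and is already carried out in \cite{NemethiVeys2012}. Hence the actual content to be proved is the third bullet point.

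For the third bullet point, the strategy is to transfer the corresponding statement at the topological level, which Némethi and Veys established in \cite{NemethiVeys2012}. Given a monodromy eigenvalue $\lambda$ of $f$, they produce an allowed form $\omega$ and a pole $s_0 = -\nu/N$ of $\topZeta{f,\omega}{s}$ with $\exp(2\pi i s_0) = \lambda$. I will argue that this same $s_0$ is forced to be a pole of $\motivicZetaWithDifferentialForm{f}{\omega}$ in the sense of the definition above, which together with Theorem \ref{thm:generalizedmotivicmonodromyconjecture} (read in the other direction) yields the claim.

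The argument is by contraposition. Suppose $s_0$ is \emph{not} a pole of $\motivicZetaWithDifferentialForm{f}{\omega}$. Then by definition there exists a set $U \subseteq \Z_{\geq 0}\times \Z_{\geq 0}$ with $-\nu'/N' \neq s_0$ for every $(\nu',N') \in U$ and a representation
\[
\motivicZetaWithDifferentialForm{f}{\omega} \in \CompletedGrothendieckRing{\C}\!\left[\tfrac{T^{N'}}{\lefschetzmotive^{\nu'}-T^{N'}}\right]_{(\nu',N')\in U}.
\]
Specializing at $T = \lefschetzmotive^{-n}$ for each $n\in\N$ and applying $\chitop$, using the characterization of $\topZeta{f,\omega}{s}$ as the unique rational function satisfying $\topZeta{f,\omega}{n} = \chitop(\motivicZetaWithDifferentialFormT{f}{\omega}{\lefschetzmotive^{-n}})$ recalled in Section \ref{section:motivicrings}, yields $\topZeta{f,\omega}{s} \in \Q[1/(N's+\nu')]_{(\nu',N')\in U}$. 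Since no denominator $N's+\nu'$ with $(\nu',N')\in U$ vanishes at $s_0$, the topological zeta function would be regular at $s_0$, contradicting that $s_0$ is a pole of $\topZeta{f,\omega}{s}$. Hence $s_0$ is a pole of $\motivicZetaWithDifferentialForm{f}{\omega}$.

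The one potential obstacle is that $\chitop$ is not defined on all of $\CompletedGrothendieckRing{\C}$, so care is needed in the specialization step. This is not a genuine difficulty: by Theorem \ref{theorem:denefformulamotiviczetafunction} each specialized coefficient $\motivicZetaWithDifferentialFormT{f}{\omega}{\lefschetzmotive^{-n}}$ lies in the image of $\LocalizedGrothendieckring{\C}$, and the elementary identity $1/(\lefschetzmotive^{\nu}-\lefschetzmotive^{-nN}) = 1/((\lefschetzmotive-1)\classof{\projectivenspace{Nn+\nu-1}{\C}})$ recorded in Section \ref{section:motivicrings} shows that $\chitop$ is unambiguously applicable to each factor in the chosen representation. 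With this verification, the proof reduces to the bookkeeping above, and Corollary \ref{cor:generalizedmonodromyconjectureformotiviczetafunctions} follows.
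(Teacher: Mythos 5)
Your decomposition into the three bullets, and your strategy for the third bullet---transferring Némethi--Veys' topological pole to the motivic level by contraposition against the definition of a motivic pole---is the same route the paper takes, and the overall structure is sound. However, your resolution of the $\chitop$-applicability issue is not quite right. The identity $\lefschetzmotive^{\nu}-\lefschetzmotive^{-nN}=\lefschetzmotive^{-nN}(\lefschetzmotive-1)\classof{\projectivenspace{\nu+nN-1}{\C}}$ does \emph{not} make $\chitop$ applicable to $\frac{1}{\lefschetzmotive^{\nu}-\lefschetzmotive^{-nN}}$, because a factor $\frac{1}{\lefschetzmotive-1}$ remains and $\chitop(\lefschetzmotive-1)=0$. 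In the paper's formula (Theorem \ref{theorem:denefformulamotiviczetafunction}) this is rescued precisely by the overall prefactor $(\lefschetzmotive-1)^{\abs{I}}$, which cancels all the unwanted $\lefschetzmotive-1$'s before applying $\chitop$. But an arbitrary element of $\CompletedGrothendieckRing{\C}\bigl[\tfrac{T^{N'}}{\lefschetzmotive^{\nu'}-T^{N'}}\bigr]_{(\nu',N')\in U}$ carries no such structural guarantee, so you cannot conclude termwise applicability of $\chitop$ from the cited identity. The standard way to make your contrapositive step rigorous is to interpose a realization that lands in a domain where $\lefschetzmotive-1$ is not a zero divisor---for example the Hodge--Deligne $E$-polynomial, sending $\lefschetzmotive\mapsto uv$; the specialized zeta function then is a genuine rational function with denominators $(uv)^{\nu'}-T^{N'}$, $(\nu',N')\in U$, one reads off that the poles in $s$ after $T=(uv)^{-s}$ lie among $\{-\nu'/N'\}$, and only at the very end one sets $u=v=1$ to recover $\topZeta{f,\omega}{s}$. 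With that replacement, your argument goes through; as written, the justification of the key specialization step has a gap.
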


\subsection{Monodromic motivic zeta function}
Theorem \ref{thm:generalizedmotivicmonodromyconjecture} however does not generalize to the case of the monodromic motivic zeta
function.
This is due to the fact that an allowed form is made such that the residues of candidate `bad' poles of the topological
zeta functions
are zero. We give here examples of twisted topological zeta functions because a pole of a twisted topological zeta  
function will also be a pole of the monodromic motivic zeta function.

We give an example where the twisted topological zeta function has a pole which does not induce a monodromy eigenvalue, and 
thus an analogue of  Corollary \ref{cor:generalizedmonodromyconjectureformotiviczetafunctions} cannot hold.

\begin{example} 
Consider the cusp $f=x^3 - y^2$ , the minimal resolution $\pi$ and differential form 
 $\omega= x^3y^3dx\wedge dy$. Then its splice diagram is the following:
  \begin{center}
 \makeatletter{}\begin{tikzpicture}
	\newcommand*{\locationOfLeft}{3}
	\newcommand*{\heightOfArrow}{2}
	\renewcommand*{\locationOfStart}{4}
	\renewcommand*{\heigthofedges}{1}
	\coordinate (een) at (-\locationOfLeft, 0);
	\coordinate (twee) at (0, 0);
	\coordinate (drie) at (\locationOfLeft, 0);
	\coordinate (vier) at (0, \heightOfArrow);
	\coordinate (vijf) at ( -\locationOfLeft-\heightOfArrow,0);
	\coordinate (zes) at ( +\locationOfLeft+\heightOfArrow,0);
	\draw (een)--(twee) node[very near start, above]{$1$} node[very near end, above]{$3$};
	\draw (twee)--(drie) node[very near start, above]{$2$} node[very near end, above]{$2$};
	\fill (een) circle [radius=3pt];
	\fill (twee) circle [radius=3pt];
	\fill (drie) circle [radius=3pt];
	\draw[thick,->] (twee)--(vier);
	\draw node[right] at (vier) {(1)};
	\draw node[above] at (vijf) {(3)};
	\draw node[above] at (zes) {(3)};
	\draw[dotted, ->] (een)--(vijf);
        \draw[dotted, ->] (drie)--(zes);
	
\end{tikzpicture} 
 \end{center}
 This form $\omega$ is allowed but   we find that 
 \[
  \topZetaTwisted{f}{\omega}{6}{s} = -\frac{1}{6s + 21},
 \] which has $s_0= -\frac{7}{2}$ as a pole. But $\exp(2\pi i s_0) = -1$ is not a  monodromy eigenvalue.  
\end{example}
One can wonder if a suitable subset of allowed forms can achieve
Corollary \ref{cor:generalizedmonodromyconjectureformotiviczetafunctions}. 
The following example shows however that we cannot obtain all poles.
\begin{example}
 Consider the polynomial $f= (y^3 - x^4)^5 + x^2y^{15}$. We have the following diagram $\Gamma$
   \begin{center}
 \makeatletter{}\begin{tikzpicture}
\coordinate (een) at (0,1.5);
\coordinate (twee) at (0,0);
\coordinate (drie) at (0,-1.5);
\coordinate (vier) at (3,0);
\coordinate (vijf) at (5,0);
\coordinate (zes) at (3 ,-1.5);
\coordinate (zeven) at (0,3);
\coordinate (acht) at (0,-3);
\coordinate (negen) at (6.5,0);
\coordinate (diff) at (.06,0);
\fill (een) circle [radius=3pt];
\fill (twee) circle [radius=3pt];
\fill (drie) circle [radius=3pt];
\fill (vier) circle [radius=3pt];
\fill (vijf) circle [radius=3pt]; 
\draw (een)--(twee) node[very near end, above, left]{$3$};
\draw (drie)--(twee) node[very near end,  left]{$4$};
\draw (twee)--(vier) node[very near end, above]{$66$} node[very near start, above]{$1$};
\draw (vier)--(vijf) node[very near start, above]{$5$};
\draw[->] ($(vier) - (diff)$)--($(zes) - (diff)$) node[very near start, left]{$1$} node[very near end, left]{$(1)$};
\draw[->, dotted] (een)--(zeven) node[very near end, right]{$i_1-1$};
\draw[->, dotted] (drie)--(acht) node[very near end, right]{$i_2-1$};
\draw[->, dotted] ($(vier) + (diff)$)--($(zes) + (diff)$) node[very near end, right]{$k-1$};
\draw[->, dotted] (vijf)--(negen) node[very near end, above]{$i_3-1$};

\end{tikzpicture} ,
 \end{center}
 where we are already considering some form with $i_1, i_2, i_3, k \in \Z$. The monodromy zeta function is
 \[
  \zeta(T) = \frac{(T^{330} - 1)(T^{60} - 1)}{(T^{66} - 1)(T^{15} - 1)(T^{20} - 1)},
 \]which implies that $\lambda = \exp(2\pi i \frac{1}{110})$ is a monodromy eigenvalue. 
However there exist no  $i_1, i_2, i_3, k$ such that $\topZetaTwistedDiagram{\Gamma}{330}{s}$ has a pole $s_0$ with
the condition $\lambda = \exp(2\pi i s_0)$ and such that $\lambda' = \exp(2\pi i s'_0) $ is a monodromy eigenvalue whenever
$s_0'$ is the pole of $ \topZetaTwistedDiagram{\Gamma}{60}{s}$. Indeed, if $\lambda$ is a pole, then $s_0$ needs
to be a pole of $\topZetaTwistedDiagram{\Gamma}{330}{s}$. This implies that 
$ 2 i_1 + 3i_2 \equiv 3 \pmod{ 6 }$. But now the pole of $ \topZetaTwistedDiagram{\Gamma}{60}{s}$ will not
induce a monodromy eigenvalue since $i_1 \equiv 0 \pmod 3$ and $i_2 \equiv 1 \pmod 2$.
\end{example}

{} 

 \bibliographystyle{amsplain}

 \appendix
 
\makeatletter{}

\section{Existence of the Picard morphism}

It turns out that the class of a smooth and proper algebraic variety in the Grothendieck ring of varieties
 determines the class of its Picard scheme.  
 In this appendix we will define and prove this statement
using the argument described in \cite{Ekedahl2009}. We do this since \cite{Ekedahl2009} was never published and to clarify several
steps in his argument.
 
 Let $X$ be a smooth and proper algebraic variety over $\C$. 
Recall that the Picard functor $\Pic_{X\slash \C}$ is a functor from the category of locally Noetherian $\C$-schemes
to the category of abelian groups defined by the formula 
\[
 \Pic_{X\slash \C}(T) = \Pic(X_T) \slash \Pic(T)
\] where $X_T = X \times_\C T$. It turns out that the associated fppf sheaf is representable by an abelian 
scheme whose identity component is an abelian variety and whose group of geometric components is finitely generated.
This scheme will be denoted by $\Pic(X)$. See \cite[Part 5]{fantechi2005fundamental} for more information on 
the Picard scheme.

\begin{definition}Define  $ \Grothendieckgroupofabelianschemes{\C}$ as 
  the abelian group generated by isomorphism classes of commutative algebraic group schemes over $\C$ whose identity component is 
 an abelian variety and whose group of geometric components is finitely generated, subject to the 
 relations $\classof{A \sumofabelian B } = \classof{ A} + \classof{B} $.

 \end{definition}
This leads us to the main result.
\begin{theorem}\label{theorem:existenceofPicardscheme}
 There is a (unique) group homomorphism $\Pic  :  \Grothendieckring{\C} \to \Grothendieckgroupofabelianschemes{\C}$ such
 that  $\Pic ( \classof{X}) = \classof{\Pic(X)}$ for every smooth proper variety $X$, 
 and it extends to a morphism 
 $\Pic : \CompletedGrothendieckRing{\C} \to \Grothendieckgroupofabelianschemes{\C}$.
\end{theorem}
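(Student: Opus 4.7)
The plan is to invoke Bittner's presentation of $\Grothendieckring{\C}$ in characteristic zero: the group is generated by classes $\classof{X}$ of smooth projective varieties $X$, subject to $\classof{\emptyset} = 0$ and the blowup relation $\classof{\Bl_Y X} - \classof{E} = \classof{X} - \classof{Y}$, where $Y \subset X$ is a smooth closed subvariety of a smooth projective $X$ and $E$ is the exceptional divisor. This presentation yields uniqueness immediately, since the values of $\Pic$ on smooth projective generators are prescribed. For existence, I would set $\Pic(\classof{X}) := \classof{\Pic(X)}$ on smooth projective $X$ and verify the blowup relation in $\Grothendieckgroupofabelianschemes{\C}$.

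The central computation is the effect of blowing up on the Picard scheme. For a blowup $\pi : \Bl_Y X \to X$ of a smooth closed subvariety $Y$ of codimension $c \geq 2$, pullback induces an isomorphism on identity components $\Pic^0(X) \xrightarrow{\sim} \Pic^0(\Bl_Y X)$, while the Néron--Severi group gains precisely the class of the exceptional divisor. This gives an isomorphism of group schemes $\Pic(\Bl_Y X) \cong \Pic(X) \times \underline{\Z}$, with $\underline{\Z}$ the constant étale group scheme. Since $E \cong \mathbb{P}(N_{Y/X})$ is a projective bundle over $Y$, the projective-bundle formula analogously gives $\Pic(E) \cong \Pic(Y) \times \underline{\Z}$. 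Taking classes in $\Grothendieckgroupofabelianschemes{\C}$, the two copies of $\classof{\underline{\Z}}$ cancel and one obtains
\[
\classof{\Pic(\Bl_Y X)} - \classof{\Pic(E)} = \classof{\Pic(X)} - \classof{\Pic(Y)},
\]
which is the desired blowup relation. This establishes the morphism $\Pic : \Grothendieckring{\C} \to \Grothendieckgroupofabelianschemes{\C}$.

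For the extension to $\CompletedGrothendieckRing{\C}$, I would equip $\Grothendieckgroupofabelianschemes{\C}$ with a filtration mirroring the dimension filtration on $\LocalizedGrothendieckring{\C}$. The natural choice measures the dimension of the identity component $A^0$ (an abelian variety) together with the rank of the component group $A/A^0$. Using the Hodge-theoretic bound $\dim \Pic^0(X) \leq \dim X$ for smooth projective $X$, I would show that $\Pic$ sends the filtered piece $F^n \subset \LocalizedGrothendieckring{\C}$ into a corresponding filtered piece on the target. Together with the observation that $\Pic(\lefschetzmotive) = \classof{\Pic(\projectivenspace{1}{\C})} - \classof{\Pic(\Spec \C)} = \classof{\underline{\Z}}$ sits in the right filtered level, this extends $\Pic$ first to $\LocalizedGrothendieckring{\C}$ by treating $\Grothendieckgroupofabelianschemes{\C}$ as a module over a suitable quotient in which $\Pic(\lefschetzmotive)$ acts trivially, and then passes to the completion by continuity.

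The hard part is precisely this final extension step. Unlike the usual Hodge or étale realizations, where the target is a ring with $\Pic(\lefschetzmotive)$ a unit, here $\Grothendieckgroupofabelianschemes{\C}$ carries no canonical ring structure in which $\classof{\underline{\Z}}$ is invertible. One must therefore either factor $\Pic$ through a quotient on which $\lefschetzmotive$ acts nilpotently on the filtered graded pieces, or endow the target with a module structure allowing a formal inversion — and in either case verify that the resulting filtration is Hausdorff so that limits of Cauchy sequences are well-defined. The blowup verification and the compatibility of dimensions are comparatively routine; the conceptual subtlety lies in setting up the target's filtered structure so that continuity and hence the extension to $\CompletedGrothendieckRing{\C}$ follow.
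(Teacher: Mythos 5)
Your first two steps (Bittner's presentation of $\Grothendieckring{\C}$, and the verification of the blowup relation $\classof{\Pic(\Bl_Y X)} - \classof{\Pic(E)} = \classof{\Pic(X)} - \classof{\Pic(Y)}$) match the paper and are essentially correct, although the paper verifies the isomorphism of group schemes at the level of the associated fppf sheaves over arbitrary flat base $T$ rather than arguing separately with identity components and N\'eron--Severi groups.

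The genuine gap is the extension to $\LocalizedGrothendieckring{\C}$ and then $\CompletedGrothendieckRing{\C}$, which you correctly identify as the hard part but then do not actually carry out. Your proposed strategy --- equip the target with a filtration, give it a module structure over a quotient in which $\Pic(\lefschetzmotive)$ acts nilpotently, and then invert $\lefschetzmotive$ formally --- does not work: $\Pic$ is only a group homomorphism, not a ring homomorphism, so there is no a priori sense in which $\lefschetzmotive$ ``acts'' on $\Grothendieckgroupofabelianschemes{\C}$, and localization at $\lefschetzmotive$ has no meaning on the target. The actual idea, which is missing from your proposal, is to define the extended map $\Pic'$ directly on the generators $\quotientInGrothendieckRing{\classof{X}}{\lefschetzmotive^n}$ of $\LocalizedGrothendieckring{\C}$ by reaching into higher cohomology: set $\Pic'\bigl(\quotientInGrothendieckRing{\classof{X}}{\lefschetzmotive^n}\bigr) := \WeilintermediateJacobian{n}{X} \sumofabelian \componentgroup{n}{X}$, where $\WeilintermediateJacobian{n}{X}$ is the Weil intermediate Jacobian of the Hodge structure $\cohomologygroups{2n+1}{X}{\Z}$ (Weil rather than Griffiths, precisely so that the result is an abelian variety) and $\componentgroup{n}{X}$ is the group of integral $(n{+}1,n{+}1)$-classes in $\cohomologygroups{2n+2}{X}{\Z}$. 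One then checks the relations in Bittner's presentation of $\LocalizedGrothendieckring{\C}$ (the blowup relation and the relation expressing $\classof{X\times\projectivenspace{1}{\C}}/\lefschetzmotive^{n+1}$) by producing the corresponding isomorphisms of Hodge structures via the cohomology of a blowup and the K\"unneth/cup-product formula, and one verifies agreement with the original $\Pic$ at $n=0$ using the classical Jacobian, the N\'eron--Severi sequence, and the Lefschetz theorem on $(1,1)$-classes. Finally, the extension to $\CompletedGrothendieckRing{\C}$ does not require any filtration or continuity argument on the target at all: it is simply the observation that $\cohomologygroups{m}{X}{\Z}=0$ for $m > 2\dimensionop X$, which forces $\Pic'\bigl(\quotientInGrothendieckRing{\classof{X}}{\lefschetzmotive^n}\bigr)=0$ whenever $\dimensionop X - n \leq 0$, so $\Pic'$ annihilates $F^0$ and hence factors through the completion.
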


This theorem provides us with a new technique to compare elements in the Grothendieck ring. 

 Recall that the Grothendieck group of abelian varieties $\Grothendieckgroupofabelianvarieties{\C}$ is defined as the 
 abelian group generated by isomorphism classes of abelian varieties with the relations 
 $\classof{A \sumofabelian B } = \classof{ A} + \classof{B} $. Using this theorem,
 we find the existence of $\widetilde{\Pic} :  \CompletedGrothendieckRing{\C} \to \Grothendieckgroupofabelianvarieties{\C} $ which
 sends a smooth complete variety to the class of its jacobian. It is obtained 
 by composing   $\Pic$ and the morphism $\Grothendieckgroupofabelianschemes{\C} \to  \Grothendieckgroupofabelianvarieties{\C} $, where this last
  map is defined by sending the class of a commutative algebraic group scheme (whose identity component is an abelian variety) to the class
  of his identity component.

  The keystone of the proof is Bittner's presentation of $ \Grothendieckring{\C} $, which we restate here.

\begin{theorem*}{\cite[Theorem 3.1]{MR2059227} }
 The Grothendieck group of $\C$-varieties $\Grothendieckring{\C}$ is isomorphic  
  to the abelian group generated by the isomorphism classes of smooth projective $\C$-varieties subject to the relations
  $\classof{\emptyset} = 0 $ and $\classof{\Bl_YX} - \classof{E} = \classof{X} - \classof{Y}$, where $X$ is
  smooth and projective, $Y\subset X$ is a closed smooth subvariety, $\Bl_Y X$ is the blow-up of $X$ along $Y$ and $E$
  is the exceptional divisor of this blow-up.
 
\end{theorem*}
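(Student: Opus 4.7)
The plan is to apply Bittner's presentation of $\Grothendieckring{\C}$, recalled at the end of the excerpt: it suffices to define the morphism on classes of smooth projective varieties and to verify that it respects the empty-variety and the blow-up relations. For $X$ smooth projective the Picard scheme $\Pic(X)$ is a commutative algebraic group scheme with $\Pic^0(X)$ an abelian variety and component group $\NS(X)$ finitely generated, so $\classof{\Pic(X)} \in \Grothendieckgroupofabelianschemes{\C}$, and I would set $\Pic(\classof{X}) := \classof{\Pic(X)}$.

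The empty-variety relation is immediate. For the blow-up relation I would verify
\[
\classof{\Pic(\Bl_Y X)} + \classof{\Pic(Y)} = \classof{\Pic(X)} + \classof{\Pic(E)}
\]
in $\Grothendieckgroupofabelianschemes{\C}$ by combining two direct-sum decompositions of commutative algebraic group schemes: (i) $\Pic(\Bl_Y X) \cong \Pic(X) \sumofabelian \Z$, where the pullback $\pi^*$ identifies the first summand and the class of the exceptional divisor generates the second (using that $\Pic^0$ is a birational invariant in characteristic zero); (ii) $\Pic(E) \cong \Pic(Y) \sumofabelian \Z$, which is the projective bundle formula for $E = \mathbb{P}(N_{Y/X}) \to Y$. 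Both are genuine direct sums of group schemes, so they give honest relations in $\Grothendieckgroupofabelianschemes{\C}$; subtracting cancels the two $\Z$-summands and yields the required equality. Uniqueness of the resulting morphism $\Pic : \Grothendieckring{\C} \to \Grothendieckgroupofabelianschemes{\C}$ is automatic since the smooth projective classes generate.

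For the extension to $\CompletedGrothendieckRing{\C}$, a useful hook is the following Bittner computation: expanding $\lefschetzmotive^2 = (\classof{\projectivenspace{1}{\C}} - \classof{\mathrm{pt}})^2$ and applying $\Pic$ together with $\Pic(\projectivenspace{1}{\C} \times Y) \cong \Pic(Y) \sumofabelian \Z$ yields $\Pic(\lefschetzmotive^2 \cdot \classof{X}) = 0$ for every smooth projective $X$, so that $\Pic$ vanishes on the ideal $\lefschetzmotive^2 \Grothendieckring{\C}$. I would leverage this vanishing to equip $\Grothendieckgroupofabelianschemes{\C}$ with a filtration $G^\bullet$ (measuring, say, the dimension of the identity component plus the rank of the component group) compatible with the dimension filtration $F^\bullet$ on $\LocalizedGrothendieckring{\C}$, so that $\Pic(F^n) \subseteq G^n$. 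Provided $\bigcap_n G^n = 0$, passing to the inverse limit then extends $\Pic$ canonically to $\CompletedGrothendieckRing{\C}$.

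The main obstacle will be this extension step. The target $\Grothendieckgroupofabelianschemes{\C}$ carries no a priori topology, so the filtration $G^\bullet$ must be constructed by hand, the compatibility $\Pic(F^n) \subseteq G^n$ verified directly, and the separation property $\bigcap_n G^n = 0$ established. The delicate point is how the constant-group-scheme classes (such as $\classof{\Z}$, which arises as $\Pic(\lefschetzmotive \cdot \classof{X})$) interact with honest abelian-variety classes in $\Grothendieckgroupofabelianschemes{\C}$: ruling out unexpected cancellations when passing through the inversion of $\lefschetzmotive$ is what makes the separation property non-obvious.
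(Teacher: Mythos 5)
The statement you were asked to prove is Bittner's presentation theorem itself: that $\Grothendieckring{\C}$ is isomorphic to the abelian group generated by classes of smooth projective varieties subject to $\classof{\emptyset}=0$ and $\classof{\Bl_YX}-\classof{E}=\classof{X}-\classof{Y}$. Your proposal never proves this. Its very first sentence takes ``Bittner's presentation of $\Grothendieckring{\C}$'' as an available tool and then uses it to construct the Picard morphism $\Pic\colon \Grothendieckring{\C}\to\Grothendieckgroupofabelianschemes{\C}$ and to extend it to $\CompletedGrothendieckRing{\C}$ --- that is the paper's Theorem \ref{theorem:existenceofPicardscheme}, a different statement. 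With respect to the statement actually posed, your argument is circular: it assumes the presentation it is supposed to establish. In the paper this statement carries no proof at all; it is quoted verbatim from \cite{MR2059227}. A genuine proof has a completely different shape: surjectivity of the natural map from the blow-up presentation to $\Grothendieckring{\C}$ uses resolution of singularities (and compactification) to write any class as an alternating sum of classes of smooth projective varieties, and injectivity uses the weak factorization theorem to factor any proper birational map of smooth projective varieties into blow-ups and blow-downs along smooth centers, so that the scissor relations reduce to the blow-up relations. None of these ingredients appears in your proposal.

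Separately, even if one reads your text as an attempt at Theorem \ref{theorem:existenceofPicardscheme}, the extension step you flag as the main obstacle is indeed left unresolved, and it is not how the paper proceeds: you would need to construct a separated filtration on $\Grothendieckgroupofabelianschemes{\C}$ compatible with $F^\bullet$, which is exactly the hard point. The paper sidesteps it by defining the image of the generator $\quotientInGrothendieckRing{\classof{X}}{\lefschetzmotive^n}$ directly, as the Weil intermediate Jacobian of $\cohomologygroups{2n+1}{X}{\Z}$ together with the group of integral $(n+1,n+1)$-classes in $\cohomologygroups{2n+2}{X}{\Z}$, checking the Bittner-type relations at the level of Hodge structures, recovering $\Pic(X)$ in the case $n=0$ via the Lefschetz $(1,1)$ theorem, and observing that these groups vanish once $\dimension{X}-n\leq 0$, so the map kills $F^0$ and factors through $\CompletedGrothendieckRing{\C}$ without any topology on the target.
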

This implies the following presentation of $\LocalizedGrothendieckring{\C} = \Grothendieckring{\C}[\lefschetzmotive^{-1}]$:
\begin{center}
\begin{minipage}{0.8\textwidth}
  $\LocalizedGrothendieckring{\C}$ is
generated by the elements $\quotientInGrothendieckRing{\classof{X}}{\lefschetzmotive^n}$
 where $X$ is smooth and projective, subject to the relations

\[\quotientInGrothendieckRing{\classof{X\times \projectivenspace{1}{\C}} }{\lefschetzmotive^{n+1} } 
 = \quotientInGrothendieckRing{\classof{X} }{\lefschetzmotive^{n} } + 	
 \quotientInGrothendieckRing{\classof{X }}{\lefschetzmotive^{n+1} }, \]  where $X$ is smooth and projective, and 
 the relations $ \quotientInGrothendieckRing{\classof{\Bl_YX}}{\lefschetzmotive^n} - \quotientInGrothendieckRing{\classof{E}}{\lefschetzmotive^n} = \quotientInGrothendieckRing{\classof{X}}{\lefschetzmotive^n} - \quotientInGrothendieckRing{\classof{Y}}{\lefschetzmotive^n}$, where $X$, $Y$ and $E$ are as in the theorem.
 
\end{minipage}
\end{center}

\begin{proof}[Proof of Theorem \ref{theorem:existenceofPicardscheme}]
\begin{itemize}
 \item We first show that the morphism defined by $\Pic( \classof{X}) = \classof{\Pic(X)}$ for $X$ smooth and proper is well-defined as a map
 from $\Grothendieckring{\C}$ to $ \Grothendieckgroupofabelianschemes{\C}$. Using the Bittner representation,
 we need to show that 
  \[\Pic(\Bl_YX) \sumofabelian \Pic(Y) = \Pic(X) \sumofabelian \Pic(E)\] where $X$, $Y$ and $E$ are as in the theorem.
  Hence we need to show this on the level 
  of associated fppf sheaves. But   a blow-up is preserved under base change by a flat morphism and thus
   $\Pic(\Bl_{Y_T} X_T) = \Pic(X_T ) \sumofabelian \Z$ and $\Pic(E_T) = \Pic( Y_T ) \sumofabelian \Z$ for any flat morphism $T \to \C$
   which induces the wanted isomorphism \cite[Exercises 7.9 and 8.5 on pages 170 and 188]{MR0463157}.

 \item The next step is to define 
 $\Pic' : \LocalizedGrothendieckring{\C} \to  \Grothendieckgroupofabelianschemes{\C}$ and prove
 that this is actually an extension of $\Pic$.

 Define $\Pic'(\classof{X} \slash \lefschetzmotive^n )$
  as 
 \[
\WeilintermediateJacobian{n}{X} \sumofabelian \componentgroup{n}{X}
 \]   for a projective and smooth variety $X$ and $n \in \N$,  
 where
 \begin{itemize}
   \item $\componentgroup{n}{X}$ is the inverse image of the classes of type $(n+1, n+1)$ 
  under the map
  $\cohomologygroups{2n+2}{X}{\Z} \to \cohomologygroups{2n+2}{X}{\C}$ and
  \item $\WeilintermediateJacobian{n}{X}$ is the Weil intermediate Jacobian associated to the Hodge structure
    on $\cohomologygroups{2n+1}{X}{\Z}$. See \cite{1952} for more information.
 \end{itemize}
 As discussed, we need to verify the two types of relations. Consider $X$ to be a projective smooth $\C$-variety.
 \begin{itemize}
  \item First we  verify the blow-up relations. This is a consequence of  \cite[Theorem 7.31]{voisin2002hodge} and 
its proof since it induces that 
\[
 \cohomologygroups{k}{X}{\Z} \sumofhodgestructures 
 \left( \bigsumofhodgestructures_{i=0}^{r-2} \cohomologygroups{k-2i - 2}{Y}{\Z} \right) \sumofhodgestructures 
 \cohomologygroups{k}{Y}{\Z}
\]
is both isomorphic to $\cohomologygroups{k}{\Bl_Y X}{\Z} \sumofhodgestructures \cohomologygroups{k}{Y}{\Z}$ and
$\cohomologygroups{k}{X 	}{\Z} \sumofabelian \cohomologygroups{k}{E}{\Z}$ as Hodge structures, where $r$ is the
codimension of $Y$ in $X$.
\item  Remark  that the cup-products  map  \cite[Theorem 11.38]{voisin2002hodge}
for $X$  and $ \projectivenspace{1}{\C}$ induces  
\[
\cohomologygroups{i+2}{X \times \projectivenspace{1}{\C} }{\Z} \cong \cohomologygroups{i+2}{X}{\Z} \sumofhodgestructures\cohomologygroups{i}{X}{\Z} 
\]  as Hodge structures \cite[p. 32]{MR2050072}.
 \end{itemize}
 Since the relations hold on the level of Hodge structures, they also hold for $\Pic'$ and thus $\Pic'$ is 
 well-defined.

\item We show now   that $\Pic'$ is indeed an extension of $\Pic$
and thus we will
show that $\Pic(X) \cong \WeilintermediateJacobian{0}{X} \sumofabelian \componentgroup{0}{X}$.
Remark   that the connected component $\Pic^0(X)$ is the classical Jacobian of $X$ and thus isomorphic to
$\WeilintermediateJacobian{0}{X}$. 

The N\'eron-Severi group $\NeronSeveriGroup{X}$ is   defined by the short exact sequence
\[
 0 \to \Pic^0(X) \to \Pic (X) \to \NS(X) \to 0
\] and is the group of components of $\Pic(X)$. This group can also be identified with the image of 
$ d : \cohomologygroups{1}{X}{\mathcal{O}_X^\times} \to \cohomologygroups{2}{X}{\Z}$. 
Now Lefschetz's theorem on $(1,1)$-classes \cite[p.163]{griffiths2011principles} implies that $\NS(X)$ is exactly 
$\componentgroup{0}{X}$.

Since we are working over an algebraically closed field and $\NS(X)$ is a discrete finitely generated  abelian group,
the short exact sequence splits and thus $\Pic(X) \cong \Pic^0(X) \sumofabelian \NS(X)$.
\item To conclude we remark that $\cohomologygroups{n}{X}{\Z} = 0$ if $n > 2 \dimension{X}$ and thus 
$\Pic'{\quotientInGrothendieckRing{\classof{X}}{\lefschetzmotive^n}}  = 0$ if $\dim(X) -n \leq 0 $.
This implies that $\Pic'$ sends
every element of $F^0$ to $0$ and thus $\Pic' $ can be extended to $\CompletedGrothendieckRing{\C}$.
\qedhere 
\end{itemize}
\end{proof}
One of the results Ekedahl obtains with this is the fact that $\CompletedGrothendieckRing{\C}$  is not a domain.
 
\bibliography{ref} 

\providecommand{\bysame}{\leavevmode\hbox to3em{\hrulefill}\thinspace}
\providecommand{\MR}{\relax\ifhmode\unskip\space\fi MR }
% \MRhref is called by the amsart/book/proc definition of \MR.
\providecommand{\MRhref}[2]{%
  \href{http://www.ams.org/mathscinet-getitem?mr=#1}{#2}
}
\providecommand{\href}[2]{#2}
\begin{thebibliography}{10}

\bibitem{MR1896232}
Dan Abramovich, Kalle Karu, Kenji Matsuki, and Jaros{\l}aw W{\l}odarczyk,
  \emph{Torification and factorization of birational maps}, J. Amer. Math. Soc.
  \textbf{15} (2002), no.~3, 531--572 (electronic). \MR{1896232 (2003c:14016)}

\bibitem{ACLM1}
E.~Artal~Bartolo, P.~Cassou-Nogu{\`e}s, I.~Luengo, and A.~Melle~Hern{\'a}ndez,
  \emph{Monodromy conjecture for some surface singularities}, Ann. Sci. \'Ecole
  Norm. Sup. (4) \textbf{35} (2002), no.~4, 605--640. \MR{1981174
  (2004e:32030)}

\bibitem{ACLM2}
Enrique Artal~Bartolo, Pierrette Cassou-Nogu{\`e}s, Ignacio Luengo, and
  Alejandro Melle~Hern{\'a}ndez, \emph{Quasi-ordinary power series and their
  zeta functions}, Mem. Amer. Math. Soc. \textbf{178} (2005), no.~841, vi+85.
  \MR{2172403 (2007d:14005)}

\bibitem{MR2059227}
Franziska Bittner, \emph{The universal {E}uler characteristic for varieties of
  characteristic zero}, Compos. Math. \textbf{140} (2004), no.~4, 1011--1032.
  \MR{2059227 (2005d:14031)}

\bibitem{MR2106970}
\bysame, \emph{On motivic zeta functions and the motivic nearby fiber}, Math.
  Z. \textbf{249} (2005), no.~1, 63--83. \MR{2106970 (2006g:14037)}

\bibitem{MR2810322}
David~A. Cox, John~B. Little, and Henry~K. Schenck, \emph{Toric varieties},
  Graduate Studies in Mathematics, vol. 124, American Mathematical Society,
  Providence, RI, 2011. \MR{2810322 (2012g:14094)}

\bibitem{dejongNoot}
Johan de~Jong and Rutger Noot, \emph{Jacobians with complex multiplication},
  Arithmetic algebraic geometry ({T}exel, 1989), Progr. Math., vol.~89,
  Birkh\"auser Boston, Boston, MA, 1991, pp.~177--192. \MR{1085259 (92d:11063)}

\bibitem{DenefLoeser92}
J.~Denef and F.~Loeser, \emph{Caract\'eristiques d'{E}uler-{P}oincar\'e,
  fonctions z\^eta locales et modifications analytiques}, J. Amer. Math. Soc.
  \textbf{5} (1992), no.~4, 705--720. \MR{1151541 (93g:11118)}

\bibitem{denefloeser2001}
Jan Denef and Fran{\c{c}}ois Loeser, \emph{Geometry on arc spaces of algebraic
  varieties}, European {C}ongress of {M}athematics, {V}ol. {I} ({B}arcelona,
  2000), Progr. Math., vol. 201, Birkh\"auser, Basel, 2001, pp.~327--348.
  \MR{1905328 (2004c:14037)}

\bibitem{MR1923998}
\bysame, \emph{Lefschetz numbers of iterates of the monodromy and truncated
  arcs}, Topology \textbf{41} (2002), no.~5, 1031--1040. \MR{1923998
  (2003f:14004)}

\bibitem{MR2050072}
Alexandru Dimca, \emph{Sheaves in topology}, Universitext, Springer-Verlag,
  Berlin, 2004. \MR{2050072 (2005j:55002)}

\bibitem{EisenbudNeumann}
David Eisenbud and Walter Neumann, \emph{Three-dimensional link theory and
  invariants of plane curve singularities}, Annals of Mathematics Studies, vol.
  110, Princeton University Press, Princeton, NJ, 1985. \MR{817982 (87g:57007)}

\bibitem{Ekedahl2009}
Torsten Ekedahl, \emph{The grothendieck group of algebraic stacks}, arXiv
  (2009), 21.

\bibitem{fantechi2005fundamental}
Barbara Fantechi, Lothar G{\"o}ttsche, Luc Illusie, Steven~L. Kleiman, Nitin
  Nitsure, and Angelo Vistoli, \emph{Fundamental algebraic geometry},
  Mathematical Surveys and Monographs, vol. 123, American Mathematical Society,
  Providence, RI, 2005, Grothendieck's FGA explained. \MR{2222646
  (2007f:14001)}

\bibitem{griffiths2011principles}
Phillip Griffiths and Joseph Harris, \emph{Principles of algebraic geometry},
  Wiley Classics Library, John Wiley \& Sons, Inc., New York, 1994, Reprint of
  the 1978 original. \MR{1288523 (95d:14001)}

\bibitem{MR0463157}
Robin Hartshorne, \emph{Algebraic geometry}, Springer-Verlag, New
  York-Heidelberg, 1977, Graduate Texts in Mathematics, No. 52. \MR{0463157 (57
  \#3116)}

\bibitem{mumford1974abelian}
David Mumford, \emph{Abelian varieties}, Tata Institute of Fundamental Research
  Studies in Mathematics, vol.~5, Published for the Tata Institute of
  Fundamental Research, Bombay; by Hindustan Book Agency, New Delhi, 2008, With
  appendices by C. P. Ramanujam and Yuri Manin, Corrected reprint of the second
  (1974) edition. \MR{2514037 (2010e:14040)}

\bibitem{NemethiVeys2012}
Andr{\'a}s N{\'e}methi and Willem Veys, \emph{Generalized monodromy conjecture
  in dimension two}, Geom. Topol. \textbf{16} (2012), no.~1, 155--217.
  \MR{2872581}

\bibitem{MR1978573}
B.~Rodrigues and W.~Veys, \emph{Poles of zeta functions on normal surfaces},
  Proc. London Math. Soc. (3) \textbf{87} (2003), no.~1, 164--196. \MR{1978573
  (2004c:14022)}

\bibitem{MR1425327}
Willem Veys, \emph{Zeta functions for curves and log canonical models}, Proc.
  London Math. Soc. (3) \textbf{74} (1997), no.~2, 360--378. \MR{1425327
  (98k:11176)}

\bibitem{Veyszetafunctionsonsingularvarieties}
\bysame, \emph{Zeta functions and ``{K}ontsevich invariants'' on singular
  varieties}, Canad. J. Math. \textbf{53} (2001), no.~4, 834--865. \MR{1848509
  (2002e:14025)}

\bibitem{VeysMEAndzetafunctionswithdifferential}
\bysame, \emph{Monodromy eigenvalues and zeta functions with differential
  forms}, Adv. Math. \textbf{213} (2007), no.~1, 341--357. \MR{2331246
  (2009c:32058)}

\bibitem{voisin2002hodge}
Claire Voisin, \emph{Hodge theory and complex algebraic geometry. {I}}, english
  ed., Cambridge Studies in Advanced Mathematics, vol.~76, Cambridge University
  Press, Cambridge, 2007, Translated from the French by Leila Schneps.
  \MR{2451566 (2009j:32014)}

\bibitem{1952}
Andr{\'e} Weil, \emph{On {P}icard varieties}, Amer. J. Math. \textbf{74}
  (1952), 865--894. \MR{0050330 (14,314e)}

\bibitem{MR2013783}
Jaros{\l}aw W{\l}odarczyk, \emph{Toroidal varieties and the weak factorization
  theorem}, Invent. Math. \textbf{154} (2003), no.~2, 223--331. \MR{2013783
  (2004m:14113)}

\end{thebibliography}

\end{document}